\documentclass[12pt]{article}

\usepackage{amsmath,amsfonts, amssymb}

\begin{document}
\bibliographystyle{plain}
\pagenumbering{arabic}
\raggedbottom

\newtheorem{theorem}{Theorem}[section]
\newtheorem{lemma}[theorem]{Lemma}
\newtheorem{proposition}[theorem]{Proposition}
\newtheorem{definition}[theorem]{Definition}

\setlength{\parskip}{\parsep}
\setlength{\parindent}{0pt}
\setcounter{tocdepth}{1}

\def \outlineby #1#2#3{\vbox{\hrule\hbox{\vrule\kern #1%
\vbox{\kern #2 #3\kern #2}\kern #1\vrule}\hrule}}%
\def \endbox {\outlineby{4pt}{4pt}{}}%

\newenvironment{proof}
{\noindent{\bf Proof\ }}{{\hfill \endbox
}\par\vskip2\parsep}

\hfuzz8pt

\newcommand{\cl}[1]{{\mathcal{C}}_{#1}}
\newcommand{\cov}{\rm Cov}
\newcommand{\der}[3]{\ra{#1}{#2}_{#3}}
\newcommand{\var}{\rm Var\;}
\newcommand{\tends}{\rightarrow \infty}
\newcommand{\ep}{{\mathbb {E}}}
\newcommand{\pr}{{\mathbb {P}}}
\newcommand{\re}{{\mathcal{R}}}
\newcommand{\rt}{\widetilde{\rho}}
\newcommand{\wt}[1]{\widetilde{#1}}
\newcommand{\bds}{\begin{displaystyle}}
\newcommand{\eds}{\end{displaystyle}}
\newcommand{\ra}[2]{{#1}^{(#2)}}
\newcommand{\rat}[1]{\ra{#1}{\tau}}
\newcommand{\sif}[2]{\Sigma_{#1}^{#2}}
\newcommand{\1}{\leavevmode\hbox{\rm \normalsize1\kern-0.26em I}}
\newcommand{\fish}{J_{\rm st}}

\title{Information inequalities and a dependent Central Limit Theorem}
\author{Oliver Johnson}
\date{September 5th 2001}
\maketitle

\makeatletter
\begin{abstract} 
We adapt arguments concerning information-theoretic convergence in the 
Central Limit Theorem to the case of
dependent random variables under Rosenblatt mixing conditions.
The key is to work with random variables perturbed
by the addition of a normal random variable, 
giving us good control of the joint density and the mixing coefficient. 
We strengthen results of
Takano and of Carlen and Soffer to provide entropy-theoretic, not weak
convergence. 
\renewcommand{\thefootnote}{}
\footnote{{\bf Key words:} Normal Convergence, Entropy, Fisher Information,
Mixing Conditions}
\footnote{{\bf AMS 1991 subject classification:} 60F05, 94A17, 62B10 }
\footnote{{\bf Address:} O.T.Johnson, Statistical Laboratory, 
Centre for Mathematical Sciences, Wilberforce Road, Cambridge, CB3 0WB,
UK.
Contact Email: {\tt otj1000@cam.ac.uk}. }
\renewcommand{\thefootnote}{\arabic{footnote}}
\setcounter{footnote}{0}
\makeatother
\end{abstract}

\section{Introduction and notation}
Under a variance constraint, entropy  is maximised by the Gaussian. It
is natural to consider whether  entropy converges to this maximum in
the  Central  Limit  Theorem  regime.   This  is  a  strong  sense  of
convergence,   and  is   discussed  by   Brown   \cite{brown},  Barron
\cite{barron} and Johnson \cite{johnson}.  These papers only deal with
the   case   of  independent   random   variables,  \cite{brown}   and
\cite{barron} in  the case  of identically distributed  variables, and
\cite{johnson} for non-identical variables satisfying a Lindeberg-like
condition.  This  paper extends  these techniques to  weakly dependent
random variables.

Takano   \cite{takano},  \cite{takano2}   considers  the   entropy  of
convolutions of dependent random variables, though he imposes a strong
$\delta_4$-mixing  condition  (see Definition  \ref{def:takanocoeff}).
Carlen and Soffer \cite{carlen}  also use entropy-theoretic methods in
the dependent  case, though the  conditions which they impose  are not
transparent. Takano, in common with  Carlen and Soffer, does not prove
convergence  in  relative  entropy  of  the full  sequence  of  random
variables,  but  rather convergence  of  the  `rooms' (in  Bernstein's
terminology),   equivalent  to  weak   convergence  of   the  original
variables.   Our   conclusion  is  stronger.   In   a  previous  paper
\cite{johnson8},   we    used   similar   techniques    to   establish
entropy-theoretic convergence for  FKG systems, which whilst providing
a  natural  physical  model,  restrict  us to  the  case  of  positive
correlation.

We will consider a doubly infinite stationary collection of random variables
$ 
\ldots, X_{-1}, X_0, X_1, X_2, \ldots$, with mean zero and finite variance. 
We write $v_n$ for $\var(\sum_{i=1}^n X_i)$ and $U_n = (\sum_{i=1}^n X_i)/
\sqrt{n}$. We will consider perturbed random variables 
$\rat{V}_n = (\sum_{i=1}^n X_i + \rat{Z}_i)/\sqrt{n} \sim U_n + \rat{Z}$,
for $\rat{Z}_i$
a sequence of $N(0,\tau)$ independent of $X_i$ and each other. 
In general, $\ra{Z}{s}$ will be a $N(0,s)$.  
If the limit $\sum_{j=-\infty}^{\infty} \cov(X_0, X_j)$ exists then we 
denote it by $v$.
\begin{definition}
Given  two  random variables $S,T$, 
the  $\alpha$-mixing coefficient is defined to be:
$$   \alpha(S,T)  =   \sup_{A,B}
\left|  \pr ((S \in A)  \cap  (T \in B))  -  \pr(S \in A) \pr(T \in B)  
\right|.$$  
If $\sif{a}{b}$ is the $\sigma$-field  generated by $X_a, X_{a+1}, \ldots
, X_b$ (where $a$ or $b$ can be infinite), then for each $t$, define:
$$ \alpha(t) = \sup \left\{ \alpha( S,T): S \in \sif{-\infty}{0}, 
T \in \sif{t}{\infty} \right\},$$
and define the process to be $\alpha$-mixing if $\alpha(t) \rightarrow 0$ as
$t \tends$. \end{definition}
See Bradley \cite{bradley} for a discussion of the 
properties and alternative definitions of mixing coefficients. 
Note that $\alpha$-mixing is sometimes referred to as strong 
mixing, and is implied by uniform mixing (control
of $|P(A | B) - P(A)|$, equivalent to the Doeblin condition for Markov 
chains). All $m$-dependent processes are $\alpha$-mixing,
as well as any stationary, real aperiodic Harris chain (which includes
every finite state irreducible aperiodic Markov chain).
\begin{definition}
For a random variable $U$ with smooth density $p$, we consider the score
function $\rho(u) = p'(u)/p(u)$, the Fisher information $J(U) = \ep \rho^2(U)$,
and the standardised Fisher information $\fish(U) = \sigma^2_U J(U) -1$.
\end{definition}
We continue the technique used to prove convergence in relative entropy
first developed by Barron \cite{barron}, and later adapted to the 
non-identical case by Johnson \cite{johnson}. That is, we use de Bruijn's
identity:
\begin{lemma} \label{lem:debruijn}
If $U$ is a random variable with density $f$ and variance 1,
and $\rat{Z}$ is a sequence of normals independent of $U$, then the 
relative entropy distance $D$ between $f$ and the standard Gaussian density
$\phi$ is given by:
$$ D(f \| \phi) = \frac{1}{2}
\int_{0}^{\infty} \left( J(U + \rat{Z}) - \frac{1}{1+\tau} \right) d\tau . $$
\end{lemma}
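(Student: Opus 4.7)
The plan is to interpret the identity as an integrated form of de~Bruijn's identity, by regarding the density $f_\tau$ of $U + \ra{Z}{\tau}$ as the solution of the heat equation $\partial_\tau f_\tau = \tfrac{1}{2}\partial_u^2 f_\tau$ with initial datum $f$.

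First I would differentiate the differential entropy $H(U + \ra{Z}{\tau}) = -\int f_\tau \log f_\tau\, du$ with respect to $\tau$. Interchanging the derivative with the integral, using $\int \partial_\tau f_\tau\, du = 0$, substituting the heat equation, and integrating by parts twice yields the classical form of de~Bruijn's identity
$$ \frac{d}{d\tau} H(U + \ra{Z}{\tau}) = \frac{1}{2} J(U + \ra{Z}{\tau}). $$
For $\tau > 0$, $f_\tau$ is smooth and has Gaussian tails (being a convolution with a Gaussian), so the boundary terms in the integration by parts vanish and the interchange of limits is justified.

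Next, since $\var(U + \ra{Z}{\tau}) = 1+\tau$, writing $\phi_s$ for the $N(0,s)$ density and expanding $\ep[\log \phi_{1+\tau}(U + \ra{Z}{\tau})]$ gives
$$ D(f_\tau \| \phi_{1+\tau}) = \tfrac{1}{2}\log(2\pi(1+\tau)) + \tfrac{1}{2} - H(U + \ra{Z}{\tau}). $$
Combined with the previous display,
$$ \frac{d}{d\tau} D(f_\tau \| \phi_{1+\tau}) = -\frac{1}{2}\left( J(U + \ra{Z}{\tau}) - \frac{1}{1+\tau} \right). $$
Scale-invariance of relative entropy gives $D(f_\tau \| \phi_{1+\tau}) = D(V_\tau \| \phi)$ for $V_\tau := (U + \ra{Z}{\tau})/\sqrt{1+\tau}$, and at $\tau = 0$ this equals $D(f \| \phi)$. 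Integrating from $0$ to $\infty$ then yields the stated formula, provided $D(V_\tau \| \phi) \to 0$ as $\tau \to \infty$.

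This last limit is the main technical obstacle. I would establish it via the entropy power inequality: writing $V_\tau = U/\sqrt{1+\tau} + \ra{Z}{\tau/(1+\tau)}$ with independent summands, the EPI gives $H(V_\tau) \geq \tfrac{1}{2}\log\bigl(2\pi e\, \tau/(1+\tau)\bigr)$, while the maximum-entropy property yields $H(V_\tau) \leq \tfrac{1}{2}\log(2\pi e)$ because $\var(V_\tau) = 1$. Sandwiching forces $H(V_\tau) \to H(\phi)$, and hence $D(V_\tau \| \phi) = H(\phi) - H(V_\tau) \to 0$, completing the argument.
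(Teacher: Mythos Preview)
The paper does not supply a proof of this lemma at all: it is simply quoted as ``de Bruijn's identity'' and treated as background, with the surrounding discussion crediting Barron \cite{barron} for the technique. So there is no paper proof to compare against.

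Your argument is the standard derivation and is correct. Two small remarks. First, the full entropy power inequality is more than you need for the endpoint $D(V_\tau\|\phi)\to 0$: the elementary fact that convolution with an independent variable cannot decrease differential entropy already gives $H(V_\tau)\geq H\bigl(\ra{Z}{\tau/(1+\tau)}\bigr)=\tfrac12\log\bigl(2\pi e\,\tau/(1+\tau)\bigr)$, which suffices for the sandwich. This avoids any worry about whether $H(U)>-\infty$ (though that is automatic here when $D(f\|\phi)<\infty$, since $D(f\|\phi)=\tfrac12\log(2\pi e)-H(U)$ for a variance-one variable). Second, you silently use continuity at $\tau=0$, i.e.\ $D(f_\tau\|\phi_{1+\tau})\to D(f\|\phi)$ as $\tau\downarrow 0$; this follows because the derivative you computed is nonpositive (so $D(f_\tau\|\phi_{1+\tau})$ is nonincreasing, hence bounded above by its $\tau\to 0$ limit) together with lower semicontinuity of relative entropy under the weak convergence $f_\tau\to f$. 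It is worth saying this explicitly, since it is where Barron's original argument does a little work.
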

Our main theorems concerning strong mixing variables are as follows:
\begin{theorem} \label{thm:strongconv}
Consider a stationary collection of
random variables $X_i$, with finite $(2+\delta)$th moment. If
$ \sum_{j=1}^{\infty} \alpha(j)^{\delta/(2+\delta)} < \infty,$ then
for any $\tau >0$:
$$\lim_{n \tends} \fish(\rat{V}_n) \rightarrow 0.$$ \end{theorem}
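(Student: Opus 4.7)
The target $\fish(\rat{V}_n) \to 0$ is, via $\fish(U) = \sigma^2_U J(U) - 1$ and $\sigma^2_{\rat{V}_n} = v_n/n + \tau \to v + \tau$, equivalent to $J(\rat{V}_n) \to 1/(v+\tau)$ (the existence of $v$ follows from Davydov's covariance inequality under the stated moment and mixing assumptions). Throughout, the Gaussian perturbation $\rat{Z}_i$ is essential: every partial sum of interest carries an independent $N(0,s)$ piece for some $s>0$, ensuring that its density is $C^\infty$ with square-integrable score, that conditional densities are smooth, and that Fisher informations are finite.

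The main engine is Bernstein's block decomposition. Fix integer sequences $p = p(n) \tends$ and $q = q(n) \tends$ with $q/p \to 0$ and $n/(p+q) \tends$. Split $\{1,\ldots,n\}$ into $k \approx n/(p+q)$ long blocks of length $p$ alternating with short blocks of length $q$; write $Y_j$ and $\wt{Y}_j$ for the perturbed partial sums over the $j$-th long and short block, each carrying its own Gaussian noise. Then $\sqrt{n}\,\rat{V}_n = \sum_j Y_j + \sum_j \wt{Y}_j$, and by stationarity and Davydov's inequality the short-block sum satisfies $\var(\sum_j \wt{Y}_j)/n = O(q/(p+q)) \to 0$. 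A Fisher-information perturbation estimate, bounding $|J(X+W)-J(X)|$ in terms of $\|W\|_2$ when $X$ already has a Gaussian component, reduces the problem to showing $J((\sum_j Y_j)/\sqrt{n}) \to 1/(v+\tau)$.

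The central step is to compare this dependent sum to an independent one. Let $Y_1^{\ast},\ldots,Y_k^{\ast}$ be independent with the same marginal laws as $Y_1,\ldots,Y_k$. Each $Y_{j+1}$ contains a $N(0,p\tau)$ piece independent of $Y_1,\ldots,Y_j$, and consecutive long blocks are separated by a gap of length $q$, so $\alpha$-mixing with parameter $\alpha(q)$ controls the dependence. A Davydov-type argument, exploiting the $(2+\delta)$-moment hypothesis and the smoothing supplied by the Gaussian piece, shows that the conditional score of $Y_{j+1}$ given the past differs from the marginal score in $L^2$ by $O(\alpha(q)^{\delta/(2+\delta)})$. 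Telescoping across $j$ and transferring through the convolution structure of $J$ bounds $|J((\sum_j Y_j)/\sqrt{n}) - J((\sum_j Y_j^{\ast})/\sqrt{n})|$ by a quantity of order $k\,\alpha(q)^{\delta/(2+\delta)}$; the summability hypothesis on $\alpha(j)^{\delta/(2+\delta)}$ allows $q$ to be chosen (with $p$ much larger still) so that this vanishes.

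Finally, the independent blocks $Y_j^{\ast}$ have variances of order $p(v+\tau)$ and, through their uniform $(2+\delta)$-moments (bounded by a Rosenthal-type inequality under mixing), satisfy the Lindeberg-like condition of \cite{johnson}, which yields $\fish((\sum_j Y_j^{\ast})/\sqrt{n}) \to 0$. Combining the three approximations delivers $\fish(\rat{V}_n) \to 0$. The principal obstacle I expect is the score-function comparison of paragraph three: one needs a quantitative Fisher-information analogue of Davydov's covariance inequality whose constants genuinely exploit Gaussian smoothing but do not degenerate as $k$, $p$ and $n$ simultaneously go to infinity, and then the delicate bookkeeping of balancing $p$, $q$ and $k$ against the decay rate of $\alpha$.
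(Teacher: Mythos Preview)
Your approach diverges from the paper's, and the step you yourself flag as the principal obstacle is a genuine gap rather than a routine detail. The paper never compares $\sum_j Y_j$ to an independent copy $\sum_j Y_j^\ast$, and no Fisher-information analogue of Davydov's covariance inequality is proved or even formulated; producing one whose constants remain uniform as $k$, $p$, $n\to\infty$ is essentially the whole difficulty of the subject, not an auxiliary lemma. Your second paragraph hides a similar problem: a bound on $|J(X+W)-J(X)|$ purely in terms of $\|W\|_2$ is not generally available, since Fisher information is not $L^2$-continuous even for Gaussian-smoothed variables, and nothing in the paper supplies such an estimate.

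The paper circumvents both issues by working recursively along dyadic scales rather than with many blocks at once. Combining Theorem~\ref{thm:fishsub} (sub-additivity of $J$ with error $C\alpha(S,T)^{1/3-\epsilon}$) and Theorem~\ref{thm:contmix} (stability of the mixing coefficient under small additive perturbations, used to insert a corridor of length $\sqrt{m}$ into one block) yields, for $S_k=\rat{V}_{2^k}$,
\[
\fish(S_{k+1}) \le \fish(S_k) + c(k) - \Delta(S_k,\wt{S}_k,1/2), \qquad c(k)\to 0.
\]
A Linnik-style dichotomy then forces $\Delta(S_L,\wt{S}_L,1/2)\le\epsilon$ for some $L$, and Proposition~\ref{prop:deltadom} converts small $\Delta$ directly into small $\fish$; an induction propagates this for all $k\ge L$, and a standard sub-additivity interpolation recovers the full sequence. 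Two points of contrast are worth noting: the corridor removal is handled through the \emph{mixing coefficient} (Theorem~\ref{thm:contmix}), not through a perturbation bound on $J$, and the independent CLT of \cite{johnson} is never invoked as a black box---the argument is self-contained at the level of the $\Delta$ functional, which is exactly what lets it avoid the Davydov-for-Fisher step your outline requires.
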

Note that the condition on the $\alpha(j)$ implies that $v_n/n \rightarrow v
< \infty$ (see Lemma \ref{lem:ibragimov}). In the next theorem, we have to 
distinguish two cases, where $v=0$ and where $v>0$. For example, if 
$Y_j$ are IID, and $X_j = Y_j - Y_{j+1}$ then $U_n = (Y_1 - Y_{n+1})/\sqrt{n}
\rightarrow \delta_0$. However, since we make a normal 
perturbation, we know that $\fish ( \rat{V}_n) = (v_n/n + \tau) 
J( \rat{V}_n) - 1 \leq (v_n/n + \tau) 
J( \rat{Z}) - 1 = v_n/n \tau$, so the case $v=0$ automatically works in
Theorem \ref{thm:strongconv}.

We can provide a corresponding result for convergence in relative entropy,
with some extra conditions:
\begin{theorem} \label{thm:strongentconv}
Consider a stationary collection of
random variables $X_i$, with finite $(2+\delta)$th moment. 
If 
\begin{enumerate}
\item{$ \sum_{j=1}^{\infty} \alpha(j)^{\delta/(2+\delta)} < \infty$}
\item{$v = \sum_{j=-\infty}^{\infty} \cov(X_0,X_j) > 0$}
\item{If $\begin{displaystyle} f_N(\tau) = \sup_{n \geq N} \left( 
\frac{n\fish(\rat{V}_n)}{v_n +n\tau} \right)
\end{displaystyle}
$, for some $N$, $\int f_N(\tau) d\tau < \infty$}
\end{enumerate}
then writing $g_n$ 
for the density of $(\sum_{i=1}^n X_i)/\sqrt{v_n}$ then:
$$\lim_{n \tends} D(g_n \| \phi) \rightarrow 0.$$ \end{theorem}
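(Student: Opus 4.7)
The plan is to reduce the theorem to Theorem \ref{thm:strongconv} by expressing $D(g_n \| \phi)$ as an integral of standardised Fisher informations of perturbed variables and then applying dominated convergence.

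First I would apply de Bruijn's identity (Lemma \ref{lem:debruijn}) to $W_n = (\sum_{i=1}^n X_i)/\sqrt{v_n}$, which has variance one and density $g_n$, obtaining
$$D(g_n \| \phi) = \frac{1}{2} \int_0^\infty \Bigl( J(W_n + \ra{Z}{\tau}) - \frac{1}{1+\tau} \Bigr) d\tau.$$
Now $W_n = \sqrt{n/v_n}\, U_n$, so by the scaling property of Fisher information, $W_n + \ra{Z}{\tau}$ has the same distribution as $\sqrt{n/v_n}$ times $U_n + \ra{Z}{v_n\tau/n} = \rat{V}_n^{(v_n\tau/n)}$, and hence
$$J(W_n + \ra{Z}{\tau}) = \frac{v_n}{n}\, J\bigl(\rat{V}_n^{(v_n\tau/n)}\bigr) = \frac{\fish\bigl(\rat{V}_n^{(v_n\tau/n)}\bigr) + 1}{1+\tau},$$
using the definition $\fish(\rat{V}_n) = (v_n/n+\tau)J(\rat{V}_n)-1$. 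Substituting and changing variable from $\tau$ to $u = v_n\tau/n$ then yields the clean identity
$$D(g_n \| \phi) = \frac{1}{2} \int_0^\infty \frac{n\, \fish(\rat{V}_n)}{v_n + n\tau}\, d\tau,$$
which is precisely the quantity controlled by hypothesis~3.

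Having obtained this formula, the conclusion follows by dominated convergence. For each fixed $\tau > 0$, Theorem \ref{thm:strongconv} gives $\fish(\rat{V}_n) \to 0$; together with $v_n/n \to v > 0$ (by hypothesis~2 and the Ibragimov-type Lemma \ref{lem:ibragimov}), this makes the integrand converge pointwise to $0$. For $n \geq N$, the integrand is bounded by the function $f_N(\tau)$ of hypothesis~3, which is by assumption integrable, so passage to the limit under the integral gives $D(g_n\|\phi) \to 0$.

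The only delicate point is the bookkeeping in the scaling step, in particular the choice of the substitution $u = v_n\tau/n$ which swaps the perturbation level on $\rat{V}_n$ back into the integration variable. All Fisher informations in sight are finite because of the normal perturbation, so no additional regularity issues arise. The role of hypothesis~2 (beyond invoking Theorem \ref{thm:strongconv}) is to ensure that $v_n + n\tau$ grows like $n(v+\tau)$, so the pointwise limit of the integrand is actually zero; in the degenerate case $v = 0$ the normalisation by $\sqrt{v_n}$ would not produce a standard Gaussian limit at all, as illustrated by the telescoping example.
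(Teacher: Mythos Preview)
Your argument is correct and is exactly the approach the paper intends: the paper's own proof consists of the single sentence ``Follows from Theorem~\ref{thm:strongconv} by a dominated convergence argument using de~Bruijn's identity, Lemma~\ref{lem:debruijn},'' and you have simply written out the scaling computation that turns de~Bruijn's integral for $W_n=(\sum X_i)/\sqrt{v_n}$ into $\tfrac12\int_0^\infty n\fish(\rat{V}_n)/(v_n+n\tau)\,d\tau$, then applied dominated convergence with the envelope $f_N$ from hypothesis~3 and the pointwise limit from Theorem~\ref{thm:strongconv}.
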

\begin{proof} Follows from Theorem \ref{thm:strongconv} by a dominated
convergence argument using de Bruijn's identity, Lemma
\ref{lem:debruijn}. \end{proof}

Note that convergence in relative entropy is a strong result and implies 
convergence in $L^1$ and hence weak convergence of the original variables.

Convergence  of   Fisher  information,  Theorem  \ref{thm:strongconv},
is actually implied by 
Ibragimov's \cite{ibragimov} classical  weak convergence
result. This follows  since the density of $\rat{V}_n$ (and its derivative)
can  be  expressed as
expectations  of  a continuous  bounded  function  of $U_n$.   Shimizu
\cite{shimizu}  discusses  this technique,  which  can  only work  for
random variables perturbed by a  normal. We hope our method may be
extended  to  the general  case,  since  results  such as  Proposition
\ref{prop:fishdecomp} do not  need the random variables to  be in this
smoothed form. For example in the independent case, we show in a 
forthcoming paper that $\fish(U_n) \rightarrow 0$, if $J(U_m)$ is finite
for some $m$, and if $U_{2^k}$ is unimodal for infinitely many $k$
(no normal perturbation is necessary). In any case, we feel there is 
independent interest in seeing why the normal distribution is the limit of 
convolutions, as the score function becomes closer to the linear case
which characterises the Gaussian.
\section{Fisher Information and convolution}
\begin{definition}
For random variables $X$, $Y$ with score functions $\rho_X$, $\rho_Y$, for
any $\beta$, we define $\widetilde{\rho}$ for the score function of
$\sqrt{\beta} X + \sqrt{1-\beta} Y$ and then:
$$ \Delta(X, Y, \beta) =
\ep \left( \sqrt{\beta} \rho_X(X) + \sqrt{1-\beta}
\rho_Y(Y) - \widetilde{\rho}\left( \sqrt{\beta} X + \sqrt{1-\beta} Y \right) 
\right)^2.$$
\end{definition}
Firstly, we provide a theorem which tells us how Fisher information changes
on the addition of two random variables which are nearly independent.
\begin{theorem} \label{thm:fishsub}
Let $S$ and $T$ be random variables, with $\max(\var S, \var T) \leq K\tau$.
Define $X = S + \rat{Z}_S$ and 
$Y = T + \rat{Z}_T$ (for $\rat{Z}_S$ and $\rat{Z}_T$
 normal $N(0,\tau)$ independent of
$S$, $T$ and each other), with score functions $\rho_X$ and $\rho_Y$.
There exists a constant $C= C(K, \tau, \epsilon)$ such that:
$$ \beta J(X) + (1-\beta) J(Y) - J \left( \sqrt{\beta} X + 
\sqrt{1-\beta} Y \right) + C \alpha(S,T)^{1/3-\epsilon} 
\geq  \Delta(X, Y, \beta).$$
\end{theorem}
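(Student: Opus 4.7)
\emph{Proof plan.} The starting point is the classical Fisher information subadditivity identity, which holds with equality when $X, Y$ are independent; in the present setting the dependence contributes an error that I would absorb into $C \alpha(S,T)^{1/3 - \epsilon}$. Two structural facts underpin the whole argument: the Gaussian perturbations make the joint density $f_{X,Y}$ and its logarithm smooth, so all scores and their derivatives exist with Gaussian-type tails controlled by $K$ and $\tau$; and because $\rat{Z}_S, \rat{Z}_T$ are independent of $(S,T)$ and of each other, measurability gives $\alpha(X,Y) \le \alpha(S,T)$, so every covariance bound in terms of $\alpha(X,Y)$ upgrades automatically.

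First I would perform the orthogonal change of variables $W = \sqrt{\beta} X + \sqrt{1-\beta} Y$, $V = -\sqrt{1-\beta} X + \sqrt{\beta} Y$. Writing $\phi = \log f_{X,Y}$ and integrating out $v$ in the identity $\partial_w f_{W,V}(w,v) = [\sqrt{\beta}\,\phi_x + \sqrt{1-\beta}\,\phi_y]\, f_{X,Y}$ yields the representation
$$ \wt{\rho}(W) \;=\; \ep\!\left[\sqrt{\beta}\, \phi_x(X,Y) + \sqrt{1-\beta}\, \phi_y(X,Y) \,\Big|\, W\right]. $$
When $X, Y$ are independent, $\phi_x = \rho_X(X)$ and $\phi_y = \rho_Y(Y)$, and the $L^2$-orthogonality of the conditional expectation, together with $\ep\rho_X(X) = \ep\rho_Y(Y) = 0$, gives the sharp identity $\beta J(X) + (1-\beta) J(Y) - J(W) = \Delta(X,Y,\beta)$. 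This is the identity whose defect must be controlled when the variables are merely mixing.

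For general $(X,Y)$, expanding the square in the definition of $\Delta$ and using the same $L^2$-projection onto $W$ gives
$$ \Delta(X,Y,\beta) \,-\, \bigl[\beta J(X) + (1-\beta) J(Y) - J(W)\bigr] \;=\; 2\sqrt{\beta(1-\beta)}\,\cov(\rho_X(X), \rho_Y(Y)) \,+\, E, $$
where $E$ measures the discrepancy between the joint-score combination $\sqrt{\beta}\phi_x + \sqrt{1-\beta}\phi_y$ and the marginal-score combination $\sqrt{\beta}\rho_X(X) + \sqrt{1-\beta}\rho_Y(Y)$ inside the projection onto $W$. Both terms on the right-hand side vanish when $X, Y$ are independent, and I would rewrite each as a sum of covariances of the form $\cov(u(X), v(Y))$. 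Each such covariance is then bounded by a truncation argument: truncate $u$ and $v$ at a level $M$, apply the Rosenblatt bounded-function inequality $|\cov(u,v)| \le 4 \|u\|_\infty \|v\|_\infty \alpha(X,Y)$ to the truncated piece, and control the tails via Markov's inequality using the pointwise bound $|\rho_X(x)| \le \tau^{-1}(|x| + \ep[|S| \mid X = x])$ and the moments of $X = S + \rat{Z}_S$ supplied by $\var S \le K\tau$ together with the Gaussian factor. Optimising $M$ in the trade-off $M^2 \alpha + M^{-\delta'}$ delivers the exponent $1/3 - \epsilon$, with all residual constants absorbed into $C(K,\tau,\epsilon)$.

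The main obstacle will be the structural rearrangement of $E$ into a sum of usable $\cov(u(X), v(Y))$ terms: the joint-score functions $\phi_x, \phi_y$ are a priori functions of $(X,Y)$, not of a single variable, so one must write $\phi_x(X,Y) - \rho_X(X) = \phi_x(X,Y) - \ep[\phi_x(X,Y) \mid X]$ and then exploit that the joint density $f_{X,Y}$ is close in sup-norm to the product $f_X f_Y$ at a rate governed by $\alpha(S,T)$ through the Gaussian kernel (essentially, $|f_{X,Y}(x,y) - f_X(x) f_Y(y)| \le C_\tau \, \alpha(S,T)$ by applying Rosenblatt's inequality to the smoothing densities $\psi_\tau(x - S)$ and $\psi_\tau(y - T)$). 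Once this rearrangement is in hand, the truncation bound above mechanically produces the stated inequality with the desired exponent.
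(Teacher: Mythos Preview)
Your proposal is correct and follows essentially the same route as the paper: the same decomposition into the cross term $2\sqrt{\beta(1-\beta)}\,\ep\rho_X(X)\rho_Y(Y)$ plus the joint-versus-marginal score discrepancy (the paper's $M_{a,b}$, your $E$), the same key device of bounding $|f_{X,Y}-f_Xf_Y|$ and its derivatives pointwise by $C_\tau\,\alpha(S,T)$ via Ibragimov/Rosenblatt applied to the bounded Gaussian kernels, and the same truncate-then-optimise scheme to extract the exponent $1/3-\epsilon$. The only cosmetic difference is that the paper truncates spatially on a box $L_B=\{|x|,|y|\le B\sqrt{\tau}\}$ (balancing $B^4\alpha$ against $B^{-(2-\epsilon)}$ with $B\sim\alpha^{-1/6}$) rather than truncating the score functions at a level $M$, which is in fact what the pointwise density bound forces once you handle $E$ as you describe in your final paragraph.
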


If $S,T$ have bounded $k$th moment, we can replace $1/3$ by
$k/(k+4)$.
The proof requires some involved analysis, and is deferred 
to Section \ref{sec:mainproof}. 

In comparison, Takano \cite{takano}, \cite{takano2} produces bounds which 
depend on $\delta_4(S,T)$, where:
\begin{definition} \label{def:takanocoeff}
For random variables $S,T$ with joint density $p_{S,T}(s,t)$ and marginal
densities $p_S(s)$ and $p_T(t)$, define the $\delta_n$ coefficient to be:
$$ \delta_n(S,T)
= \left( \int p_S(s) p_T(t) \left| \frac{p_{S,T}(s,t)}{p_S(s) p_T(t)} -1
\right|^n ds dt \right)^{1/n}.$$ \end{definition}
In the case where $S,T$ have a continuous joint density, it
is clear that Takano's condition is more restrictive, and lies between two
more standard measures of dependence:
$$ 4\alpha(S,T) \leq \delta_4(S,T) \leq \delta_\infty(S,T)
= \psi(S,T) = 
\sup_{A,B} \left| \frac{P(A \cap B)}{P(A)P(B)} - 1 \right|.$$
(as before see Bradley \cite{bradley} for a discussion of different mixing
conditions).

Another use of the smoothing of the variables allows us to control the 
mixing coefficients themselves:
\begin{theorem} \label{thm:contmix}
For $S$ and $T$, define  $X = S+ \rat{Z}_S$ and  $Y = T+ \rat{Z}_T$,
where $\max(\var S, \var T) \leq K\tau$. 
If $Z$ has variance
$\epsilon$, then there exists a function $f_K$ such that
$$ \alpha(X+Z, Y) \leq \alpha(X, Y) + f_K(\epsilon),$$
where $f_K(\epsilon) \rightarrow 0$ as $\epsilon \rightarrow 0$. \end{theorem}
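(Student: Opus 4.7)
The plan is to condition on $Z$, which I take to be independent of $(X, Y)$, and reduce the stated bound to the $\alpha$-mixing inequality for the unperturbed pair $(X, Y)$ applied at a shifted set; the variance hypothesis on $Z$ will not be needed in the direction as stated.

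First I would write, for arbitrary Borel $A, B$, using the independence of $Z$ from $(X, Y)$,
$$\pr(X + Z \in A, Y \in B) = \int \pr(X \in A - z, Y \in B)\, p_Z(dz), \qquad \pr(X + Z \in A) = \int \pr(X \in A - z)\, p_Z(dz).$$
Subtracting the product $\pr(X + Z \in A)\pr(Y \in B)$ and moving the difference inside the integral gives
$$\pr(X + Z \in A, Y \in B) - \pr(X + Z \in A) \pr(Y \in B) = \int \bigl[ \pr(X \in A - z, Y \in B) - \pr(X \in A - z) \pr(Y \in B) \bigr] p_Z(dz).$$
Next I would observe that, for every $z$, the shifted Borel set $A - z$ is itself a valid choice in the supremum defining $\alpha(X, Y)$, so the integrand is bounded in absolute value by $\alpha(X, Y)$. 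Pulling absolute values inside the integral and then taking the sup over $A, B$ yields $\alpha(X + Z, Y) \leq \alpha(X, Y)$, which proves the theorem with $f_K \equiv 0$.

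Given how short the stated direction is, the role of $f_K(\epsilon)$ is presumably to leave room for the substantive companion continuity bound in the reverse direction, $\alpha(X, Y) \leq \alpha(X + Z, Y) + g_K(\epsilon)$, which is what actually uses the smallness of $\var Z$. There the plan would be to control $d_{TV}((X + Z, Y), (X, Y))$ by Taylor-expanding $p_{X+Z, Y}(a, b) - p_{X, Y}(a, b)$ in the first coordinate and exploiting the smoothness of $p_X = p_S \ast \phi_\tau$ through the explicit estimate $\|\phi_\tau''\|_1 = O(1/\tau)$, which is uniform in $\var S$ and in particular in the constant $K$.

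The main obstacle for that reverse continuity statement, should one pursue it, is handling a possibly noncentered $Z$: the first-order Taylor term does not vanish automatically. I would split into $\{|Z| \leq \delta\}$, on which the linear term is controlled uniformly by $\|p_X'\|_1 = O(\tau^{-1/2})$, and $\{|Z| > \delta\}$, which has probability at most $\epsilon/\delta^2$ by Chebyshev, and choose $\delta = \epsilon^{1/3}$ to balance. In the direction actually stated in the theorem there is no obstacle at all — a single conditioning step suffices.
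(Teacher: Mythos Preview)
The gap is your independence assumption. You write ``which I take to be independent of $(X,Y)$'', but nothing in the statement says this, and in the one place the theorem is actually used it is false. In the proof of Theorem~\ref{thm:strongconv} the role of $X$ is played by $\ra{V}{\tau/2}_{m-\sqrt m}$, the role of $Y$ by $\ra{W}{\tau/2}_n$, and $Z$ is (up to scaling) the corridor block built from $X_{m-\sqrt m+1},\ldots,X_m$. That block sits immediately adjacent to the $Y$-block in the underlying stationary sequence and is therefore dependent on $Y$; this dependence is exactly why the rooms--corridors device is needed in the first place. Without $Z\perp(X,Y)$ your conditioning identity collapses: one only gets
\[
\pr(X+Z\in A,\;Y\in B)=\int \pr\bigl(X\in A-z,\;Y\in B \,\bigm|\, Z=z\bigr)\,p_Z(dz),
\]
and there is no reason for the conditional integrand to be within $\alpha(X,Y)$ of $\pr(X\in A-z\mid Z=z)\,\pr(Y\in B)$. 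In general $\alpha(X+Z,Y)$ can genuinely exceed $\alpha(X,Y)$, which is why a nontrivial $f_K(\epsilon)$ appears.

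The paper's argument is closer in spirit to what you sketch for the ``reverse'' direction, but carried out for the stated inequality and with no independence hypothesis on $Z$. One truncates to $L_B$, pays $O(K/B^2)$ for the tails by Chebyshev, and on $L_B$ bounds the total-variation distances $\int_{L_B}|p_{W,Y}-p_{X,Y}|$ and $\int|p_W-p_X|\1(|w|\le B\sqrt\tau)$. The latter is handled by writing $p_W(w)=\int p_{X,Z}(w-z,z)\,dz$ versus $p_X(w)=\int p_{X,Z}(w,z)\,dz$ and controlling the ratio $p_{X,Z}(w-z,z)/p_{X,Z}(w,z)=\exp\bigl(\int_{w-z}^{w}\der{\rho}{1}{X,Z}(u,z)\,du\bigr)$ on $\{|z|\le\delta^2\}$ via a bivariate version of Lemma~\ref{lem:scorel2}; this is where the smoothing $X=S+\rat{Z}_S$ and the bound $\var S\le K\tau$ enter. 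The complement $\{|z|>\delta^2\}$ is disposed of by Chebyshev on $\var Z=\epsilon$, and one balances $\delta$ against $\epsilon$. Note also that your Taylor-expansion plan implicitly uses $p_{X+Z}=p_X\ast p_Z$, which again requires $Z\perp X$ and so does not apply here either.
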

\begin{proof} See Section \ref{sec:contmix}. \end{proof}

To complete our analysis, we need lower bounds on the term
$ \Delta(X, Y, \beta)$. For independent
$X$, $Y$ it equals zero exactly when $\rho_X$ and $\rho_Y$ 
are linear, and if it is small then $\rho_X$ and $\rho_Y$ are close
to linear. Indeed, in \cite{johnson} we make two definitions:
\begin{definition} For a function $\psi$, define the class of random variables
$X$ with variance $v_X$ such that:
$$ {{\cal{C}}}_\psi = \{ X: \ep X^2 \1(|X| \geq R\sqrt{v_X} ) \leq v_X \psi(R)
\} .$$
Further, define a semi-norm $\| \; \|_{\Theta}$ on functions via:
$$ \| f \|_{\Theta}^2 = \inf_{a,b} \; \; 
\ep \left( f(\ra{Z}{\tau/2}) - a \ra{Z}{\tau/2} -b
\right)^2.$$ \end{definition}
Combining results from previous papers we obtain:
\begin{proposition} \label{prop:deltadom}
For $S$ and $T$ with $\max(\var S,\var T) \leq K \tau$, define 
$X = S+ \rat{Z}_S$, $Y= T+ \rat{Z}_T$. 
For any $\psi$, $\delta >0$, there exists a function $\nu = \nu_{\psi,\delta,K,
\tau}$,
with $\nu(\epsilon) \rightarrow 0$ as $\epsilon \rightarrow 0$, such that
if $X,Y \in {\cal{C}}_\psi$, and 
$\beta \in (\delta, 1- \delta)$ then
$$ \fish(X) \leq \nu( \Delta(X, Y, \beta)).$$
\end{proposition}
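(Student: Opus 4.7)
The plan is to chain together two ingredients already isolated in Johnson's earlier papers (\cite{johnson,johnson8}): first, a projection-type inequality showing that $\Delta(X,Y,\beta)$ dominates the semi-norm $\|\rho_X\|_\Theta^2$ (and similarly for $Y$); second, a continuity result showing that on the class $\cl{\psi}$ the standardised Fisher information $\fish$ admits a modulus of continuity in that semi-norm, vanishing at the linear (Gaussian) score.

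For the first ingredient, the key observation is that $\wt{\rho}\bigl(\sqrt{\beta}X+\sqrt{1-\beta}Y\bigr)$ is the conditional expectation of $\sqrt{\beta}\rho_X(X)+\sqrt{1-\beta}\rho_Y(Y)$ given the sum, so $\Delta(X,Y,\beta)$ is exactly the squared $L^2$-distance from this projection. To extract information about $\rho_X$ alone, I would split the Gaussian perturbation $\rat{Z}_S$ as the independent sum of two $N(0,\tau/2)$ variables and condition on the inner one. Regressing $\rho_X(X)$ against that inner Gaussian component produces a term whose size is precisely $\|\rho_X\|_\Theta^2$, while the residual is absorbed into $\Delta$. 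The constraint $\beta\in(\delta,1-\delta)$ is what keeps the resulting constant $c_1=c_1(\delta,K,\tau)$ bounded, giving
$$ \|\rho_X\|_\Theta^2 + \|\rho_Y\|_\Theta^2 \leq c_1\,\Delta(X,Y,\beta). $$

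For the second ingredient, the tail condition $X\in\cl{\psi}$ is what allows one to upgrade closeness of $\rho_X$ to a linear function on a Gaussian test measure (which is all $\|\cdot\|_\Theta$ sees) to closeness on the true law of $X$, and hence to smallness of $\fish(X)=(\var X)J(X)-1$. A compactness/tightness argument in $\cl{\psi}$, already carried out in \cite{johnson}, yields a modulus $\omega=\omega_{\psi,K,\tau}$ with $\omega(\eta)\to 0$ as $\eta\to 0$ such that $\fish(X)\leq\omega(\|\rho_X\|_\Theta)$ whenever $X\in\cl{\psi}$ and $\var X\leq(K+1)\tau$. Composing the two steps gives the desired function $\nu(\epsilon):=\omega(\sqrt{c_1\epsilon})$, which tends to $0$ as $\epsilon\to 0$.

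The main obstacle is the first step: $\Delta$ is a global quantity attached to the sum $\sqrt{\beta}X+\sqrt{1-\beta}Y$, and converting it into a local bound on $\rho_X$ requires the Gaussian split of the perturbation together with a careful choice of test function against which to regress the score; the lower bound $\beta>\delta$ is exactly what prevents the constants from blowing up. Once this is in place, the second step is essentially a reformulation of the continuity results on $\cl{\psi}$ already established in \cite{johnson}.
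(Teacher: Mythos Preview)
Your second ingredient --- the modulus of continuity $\omega$ taking $\|\rho_X\|_\Theta$ to $\fish(X)$ on the class $\cl{\psi}$ --- is exactly what the paper invokes from \cite{johnson}, so that part is fine.

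The gap is in your first ingredient. You write that $\wt{\rho}(\sqrt{\beta}X+\sqrt{1-\beta}Y)$ is the conditional expectation of $\sqrt{\beta}\rho_X(X)+\sqrt{1-\beta}\rho_Y(Y)$ given the sum, and hence that $\Delta$ is a projection residual. That identity holds only when $X$ and $Y$ are independent: in general $\wt{\rho}$ is the conditional expectation of the \emph{joint} scores $\der{\rho}{i}{X,Y}$, not of the marginal ones (see the lemma opening Section~\ref{sec:mainproof}). Since the whole point of this proposition is to cover dependent $S,T$, the projection picture breaks down, and with it your ``regress against the inner Gaussian'' step: that regression is taken with respect to the joint law of $(X,Y)$, which need not split as a product, so the inner $N(0,\tau/2)$ component of $\rat{Z}_S$ is not independent of the rest of the configuration.

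The paper repairs this not by a projection argument but by a pointwise density comparison. Using Chebyshev on $(S,T)$ and the bound $\max(\var S,\var T)\leq K\tau$, one shows (reproducing Lemma~3.1 of \cite{johnson3}) that the joint density satisfies
\[
p(x,y)\;\geq\;\tfrac{1}{4}\,e^{-4K}\,\phi_{\tau/2}(x)\,\phi_{\tau/2}(y)
\]
for all $(x,y)$. This lets one replace the expectation defining $\Delta$ by an integral against the \emph{product} Gaussian measure $\phi_{\tau/2}\otimes\phi_{\tau/2}$, at the cost of a multiplicative constant depending only on $K$. Only after this decoupling does Proposition~3.2 of \cite{johnson} --- essentially the Hermite/regression inequality you have in mind --- apply to give
\[
\Delta(X,Y,\beta)\;\geq\;c(K)\,\beta(1-\beta)\bigl(\|\rho_X\|_\Theta^2+\|\rho_Y\|_\Theta^2\bigr),
\]
and now $\beta\in(\delta,1-\delta)$ bounds the constant as you say. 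So the missing idea is precisely this density lower bound; your Gaussian-splitting heuristic is the right intuition for the \emph{subsequent} step, but it cannot be run directly under the dependent law $p(x,y)$.
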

\begin{proof} 
We reproduce the proof of Lemma 3.1 of Johnson and Suhov \cite{johnson3},
which implies $p(x,y) \geq (\exp(-4K)/4) \phi_{\tau/2}(x) \phi_{\tau/2}(y)$. 
This follows since by Chebyshev
$\int \1(s^2 + t^2 \leq 4K\tau) dF_{S,T}(s,t) \geq 1/2$, and
since $(x-s)^2 \leq 2x^2 + 2s^2$:
\begin{eqnarray*}
p(x,y) & = &
\int \phi_{\tau}(x-s) \phi_{\tau}(y-t) dF_{S,T}(s,t) \\ 
& \geq & \frac{1}{2} \min\{ \phi_{\tau}(x-s) \phi_{\tau}(y-t) 
 : s^2 + t^2 \leq 4K\tau \} \\
& = & \frac{\phi_{\tau/2}(x) \phi_{\tau/2}(y)}{4}  
\exp \left( \min_{s^2 + t^2 \leq 4K\tau}
\left\{ \frac{ -s^2 - t^2}{\tau} \right\} \right) \\
& \geq & \frac{1}{4} \exp( -4K) \phi_{\tau/2}(x) \phi_{\tau/2}(y)
\end{eqnarray*}
Hence writing $h(x,y) = \sqrt{\beta} \rho_X(x) + \sqrt{1-\beta}
\rho_Y(y) - \widetilde{\rho} \left( \sqrt{\beta} x + \sqrt{1-\beta} y 
\right)$, then:
\begin{eqnarray*}
\Delta(X,Y,\beta)
& = & \int p(x,y) h(x,y)^2 dx dy \\
& \geq & \frac{\exp(-8K)}{16} \int \phi_{\tau/2}(x) \phi_{\tau/2}(y)
h(x,y)^2 dx dy \\
& \geq & \frac{\beta(1-\beta) \exp(-8K)}{32} \left(
\| \rho_X \|_{\Theta}^2 + \| \rho_Y \|_{\Theta}^2 \right),
\end{eqnarray*}
by Proposition 3.2 of Johnson \cite{johnson}. 
The crucial result of \cite{johnson} implies that
for fixed $\psi$, if the sequence $X_n \in {{\cal{C}}}_\psi$ have 
score functions $ \rho_n$, then $\| \rho_n \|_{\Theta} \rightarrow 0$
implies that $\fish(X_n) \rightarrow 0$. 
\end{proof}

We therefore concentrate on random processes such that the sums
$(X_1 + X_2 + \ldots X_m)$ have uniformly decaying tails:
\begin{lemma}[Ibragimov, \cite{ibragimov}] \label{lem:ibragimov}
If $\{ X_j \}$ are stationary
with $\ep |X|^{2+\delta} < \infty$ for some $\delta >0$
and
$ \sum_{j=1}^{\infty} \alpha(j)^{\delta/\delta+2} < \infty,$
then
\begin{enumerate}
\item{$(X_1 + \ldots X_m)$ belong to some class ${{\cal{C}}}_\psi$, uniformly in $m$.}
\item{$v_n/n \rightarrow v = 
\sum_{j=-\infty}^{\infty} \cov(X_0, X_j) < \infty$.}
\end{enumerate}
\end{lemma}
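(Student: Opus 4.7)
The plan is to handle the two conclusions separately, starting with (2), since the covariance estimate used there also underpins the variance appearing in (1).

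The main tool for (2) is the Davydov/Ibragimov covariance inequality: if $S$ is $\sigma(\sif{-\infty}{0})$-measurable and $T$ is $\sigma(\sif{t}{\infty})$-measurable with $\ep|S|^{2+\delta}, \ep|T|^{2+\delta} < \infty$, then
\[
|\cov(S,T)| \; \leq \; C \, \alpha(t)^{\delta/(2+\delta)} \, \|S\|_{2+\delta} \, \|T\|_{2+\delta}.
\]
Applying this with $S = X_0$, $T = X_j$ and using stationarity, the hypothesis $\sum_j \alpha(j)^{\delta/(2+\delta)} < \infty$ makes $\sum_{j=-\infty}^{\infty} \cov(X_0, X_j)$ absolutely convergent, so $v$ is well defined and finite. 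Then I would expand
\[
\frac{v_n}{n} \;=\; \var(X_0) + 2 \sum_{j=1}^{n-1} \left(1 - \frac{j}{n}\right) \cov(X_0, X_j),
\]
and pass to the limit by dominated convergence (the summable majorant $|\cov(X_0,X_j)|$ supplied by Davydov's bound), giving $v_n/n \to v$.

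For (1), I would recast the claim as uniform integrability of the family $\{ S_m^2 / v_m \}_{m \geq 1}$, where $S_m = X_1 + \cdots + X_m$. It suffices to produce some $\delta' \in (0, \delta]$ and a constant $C$ with
\[
\ep |S_m|^{2+\delta'} \; \leq \; C \, v_m^{1 + \delta'/2} \qquad \text{for all } m,
\]
since then Markov's inequality gives $\ep S_m^2 \1(|S_m| \geq R\sqrt{v_m}) \leq C v_m / R^{\delta'}$, so one may take $\psi(R) = C/R^{\delta'}$. Such a Rosenthal-type bound for strong-mixing sequences is standard (Yokoyama, Shao, Doukhan), and the hypotheses $\ep|X|^{2+\delta} < \infty$ and $\sum \alpha(j)^{\delta/(2+\delta)} < \infty$ are exactly of the type under which one proves it, typically by dyadic blocking together with the covariance inequality above and interpolation between $L^2$ and $L^{2+\delta}$.

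The main obstacle is really the uniform Rosenthal estimate in the degenerate regime $v = 0$: here $v_m$ stays bounded, so the right-hand side above does not grow, and one must verify that the blocking arguments do not inflate the variance beyond $v_m$. In that regime, however, $S_m$ itself is $L^2$-bounded, and the $(2+\delta')$-moment bound follows more directly from truncation at level $R\sqrt{v_m}$ and the Davydov inequality applied on blocks. With the moment bound in hand, (1) reduces to the elementary Markov estimate described above, and the lemma is complete.
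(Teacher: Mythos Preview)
The paper does not supply its own proof of this lemma: it is quoted from Ibragimov \cite{ibragimov} and used as a black box. Your sketch is essentially the argument Ibragimov gives there --- the Davydov covariance bound to get absolute summability of $\cov(X_0,X_j)$ and hence $v_n/n \to v$, and the moment inequality $\ep|S_m|^{2+\delta} \leq C\, v_m^{(2+\delta)/2}$ (proved in \cite{ibragimov} by a blocking/doubling argument using that same covariance bound) from which the uniform tail condition defining $\cl{\psi}$ follows via Markov. One remark: your separate discussion of the degenerate case $v=0$ is not really needed here, since the paper handles $v=0$ directly before Theorem~\ref{thm:strongentconv} via the crude bound $\fish(\rat{V}_n)\leq v_n/(n\tau)$, and Ibragimov's moment inequality in \cite{ibragimov} is in any case stated and proved without a positivity assumption on $v$.
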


We are able to complete the proof of the CLT, under strong mixing conditions.

\begin{proof}{\bf of Theorem \ref{thm:strongconv}}
Combining Theorems \ref{thm:fishsub} and \ref{thm:contmix}, and 
defining $\rat{\wt{V}}_n = (\sum_{i=m+1}^n X_i + \rat{Z}_i)/\sqrt{n}$, we
obtain that
for $m \geq n$,
$$ \fish(\rat{V}_{m+n})
\leq \frac{m}{m+n} \fish( \rat{V}_m) + \frac{n}{m+n} \fish( \rat{V}_n) +
c(m) - \Delta \left( \rat{V}_m, \rat{\wt{V}}_n, \frac{m}{m+n} \right),$$
where $c(m) \rightarrow 0$ as $m \rightarrow \infty$. We show this using the 
idea of `rooms and corridors' -- that the sum can be decomposed into sums 
over blocks which are large, but separated, and so close to independence.
For example, writing $\ra{W}{\tau/2}_n = 
(\sum_{i=m+1}^{m+n} X_i)/\sqrt{n} + \ra{Z}{\tau/2}$, Theorem \ref{thm:contmix}
shows that 
$$\alpha( \ra{V}{\tau/2}_m , \ra{W}{\tau/2}_n) \leq
\alpha( \ra{V}{\tau/2}_{m-\sqrt{m}} , \ra{W}{\tau/2}_n) + f_K(1/\sqrt{m}) 
= \alpha(\sqrt{m}) + f_k(1/\sqrt{m}).$$
In the notation of 
Theorem \ref{thm:fishsub}, $c(m) = C(K, \tau/2, \epsilon)
(\alpha(\sqrt{m}) + f_k(1/\sqrt{m}))^{1/3-\epsilon}$.

We first establish convergence along the `powers of 2 subsequence'
$S_k = \rat{V}_{2^k}$, writing $\wt{S}_k$ for $(\sum_{i=2^k}^{2^{k+1}} 
X_i + \rat{Z}_i)/\sqrt{2^k}$, since
$$ \fish(S_{k+1}) \leq \fish(S_k) + c(k) - \Delta(S_k, \wt{S}_k, 
1/2)$$ where $c(k) \rightarrow 0$. Then use an argument structured like
Linnik's proof \cite{linnik}.
Given $\epsilon$, we can find $K$ such that $c(k) \leq \epsilon/2$, for all
$k \geq K$. Now 
\begin{enumerate}
\item{either for all $k \geq K$, $2c(k) \leq 
\Delta(S_k,\wt{S}_k, 1/2)$, and so
$$ \fish(S_{k}) - \fish(S_{k+1}) \geq \Delta(S_k, \wt{S}_k, 1/2)/2,$$ 
so summing the telescoping sum, we deduce that
$ \sum_k \Delta(S_k, \wt{S}_k, 1/2)$ is finite, 
and hence there exists $L$ such 
that $\Delta(S_L, \wt{S}_L, 1/2) \leq \epsilon$.}
\item{or  for some $L \geq K$, $2c(L) \geq 
\Delta(S_L, \wt{S}_L, 1/2)$, then $\Delta(S_L, \wt{S}_L, 1/2) \leq \epsilon$.}
\end{enumerate}
Thus, in either case, there exists $L$ such that 
$\Delta(S_L , \wt{S}_L, 1/2) \leq \epsilon$, and hence by Proposition 
\ref{prop:deltadom}, $\fish(S_L) \leq \nu(\epsilon).$

Now, for any $k \geq L$, either $\fish(S_{k+1}) \leq \fish(S_k)$, or
$\Delta(S_k, \wt{S}_k, 1/2) \leq c(k) \leq \epsilon$. In the 
second case, $\fish(S_k) \leq \nu(\epsilon)$, so that
$\fish(S_{k+1}) \leq \nu(\epsilon) + \epsilon$. In either case, we prove
by induction that for all $k \geq L$, that
$\fish(S_{k+1}) \leq \nu(\epsilon) + \epsilon$.

We can fill in the gaps to gain control of the whole sequence, adapting
the proof of the standard sub-additive inequality, using the methods 
described in Appendix 2 of Grimmett \cite{grimmett2}.
\end{proof}

\section{Proof of sub-additive relations} \label{sec:mainproof}

This is the key part of the argument, proving the  bounds at the heart of
the limit theorems. However, although the analysis is somewhat involved,
it is not technically difficult.

We introduce notation where it will be clear whether densities and score
functions are associated with joint or marginal distributions, by their
number of arguments: $\rho_X(x)$ will be the score function of $X$, 
and $p'_X(x)$ the derivative of its density. For joint densities
$p_{X,Y}(x,y)$, $\der{p}{1}{X,Y}(x,y)$ will be the derivative of the 
density with respect to the first argument and $\der{\rho}{1}{X,Y}(x,y)
= \der{p}{1}{X,Y}(x,y)/p_{X,Y}(x,y)$, and so on.

Note that a similar equation to the independent case tells us about the 
behaviour of Fisher Information of sums: 
\begin{lemma}
If $X$, $Y$ are random variables, 
with joint density $p(x,y)$, and score functions $\der{\rho}{1}{X,Y}$ and 
$\der{\rho}{2}{X,Y}$ then $X+Y$ has score function $\widetilde{\rho}$
given by
$$ \widetilde{\rho}(z) = 
    \ep \left[ \left. \der{\rho}{1}{X,Y}(X,Y) \right| X+Y=z \right] =
    \ep \left[ \left. \der{\rho}{2}{X,Y}(X,Y) \right| X+Y=z \right]
.$$
\end{lemma}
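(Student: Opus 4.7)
The plan is to compute the density $q$ of $Z = X+Y$ directly, differentiate it in $z$, and recognise the quotient $q'(z)/q(z)$ as a conditional expectation.

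There are two natural parametrisations of the line $\{X+Y = z\}$. Holding $x$ free and setting $y = z-x$ gives
$$ q(z) = \int p(x, z-x) \, dx, $$
while holding $y$ free and setting $x = z-y$ gives
$$ q(z) = \int p(z-y, y) \, dy. $$
Differentiating the first expression with respect to $z$ touches only the second argument of $p$, so
$$ q'(z) = \int \der{p}{2}{X,Y}(x, z-x) \, dx
 = \int \der{\rho}{2}{X,Y}(x, z-x) \, p(x, z-x) \, dx. $$
Dividing by $q(z)$ and noting that $p(x, z-x)/q(z)$ is precisely the conditional density of $X$ given $X+Y = z$, we obtain
$$ \widetilde{\rho}(z) = \ep\left[ \left. \der{\rho}{2}{X,Y}(X,Y) \right| X+Y = z \right]. $$
The identical calculation applied to the second representation produces the formula with $\der{\rho}{1}{X,Y}$ in place of $\der{\rho}{2}{X,Y}$.

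The only real obstacle is justifying the interchange of differentiation and integration: one needs $p$ smooth enough that the partial derivatives $\der{p}{i}{X,Y}(x, z-x)$ are dominated uniformly in $z$ (on compact neighbourhoods) by an integrable function of $x$. In every application of this lemma in the paper the variables have been perturbed by an independent normal $\rat{Z}$, so the joint density is a convolution with a Gaussian kernel and is therefore arbitrarily smooth with well-controlled derivatives; the interchange is then routine. The two identities are simply the two ways of coordinatising $\{X+Y = z\}$, and they recover the classical formula $\widetilde{\rho}(z) = \ep[\rho_X(X) \mid X+Y = z]$ when $X$ and $Y$ are independent.
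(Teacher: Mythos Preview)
Your argument is correct and matches the paper's own proof essentially line for line: the paper also writes the sum density as $r(z)=\int p_{X,Y}(z-y,y)\,dy$, differentiates under the integral to pick up $\der{p}{1}{X,Y}$, and divides to recognise the conditional expectation. Your version is in fact slightly more complete, since you spell out both parametrisations (and hence both identities) and flag the regularity needed to differentiate under the integral, whereas the paper only exhibits the $\der{\rho}{1}{X,Y}$ case.
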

\begin{proof}
Since $X+Y$ has density $ r(z) = \int p_{X,Y}(z-y,y) dy$, then: 
$$r'(z) = \int \der{p}{1}{X,Y}(z-y,y) dy.$$
Hence dividing, we obtain that:
$$ \widetilde{\rho}(z) = \frac{r'(z)}{r(z)}   = 
\int \der{\rho}{1}{X,Y}(z-y,y) \frac{p_{X,Y}(z-y,y)}{r(z)} dy,$$
as claimed. \end{proof}

For given $a,b$, define the function $M(x,y) = M_{a,b}(x,y)$ by:
$$ M(x,y) = a \left( \der{\rho}{1}{X,Y}(x,y)- \rho_X(x) \right) +  
b \left( \der{\rho}{2}{X,Y}(x,y)- \rho_Y(y) \right),$$ which is zero 
if $X$ and $Y$ are independent. Using properties of the perturbed density,
we will show that if $\alpha(S,T)$ is small, 
then $M$ is close to zero.
\begin{proposition} \label{prop:fishdecomp}
If $X,Y$ are random variables, with marginal score functions $\rho_X,\rho_Y$, 
and if the sum $\sqrt{\beta} X + \sqrt{1-\beta} Y$ has score function 
$\widetilde{\rho}$ then 
\begin{eqnarray*}
\lefteqn{ \beta J(X) + (1-\beta) J(Y) - J \left(\sqrt{\beta} X + 
\sqrt{1-\beta} Y \right)} \\
& &  + 2 \sqrt{\beta(1-\beta)} \ep \rho_X(X) \rho_Y(Y)
+ 2 \ep M_{\sqrt{\beta},\sqrt{1-\beta}}(X,Y) 
\widetilde{\rho}(X+Y)  \\
& = & \ep \left( \sqrt{\beta} \rho_X(X) + \sqrt{1-\beta}
\rho_Y(Y) - \widetilde{\rho} \left(\sqrt{\beta} X + \sqrt{1-\beta} Y \right) 
\right)^2  \end{eqnarray*}
\end{proposition}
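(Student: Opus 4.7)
My plan is to regroup terms so that everything reduces to a single identity of the form $\ep\!\left[B\,\widetilde\rho(S)\right] = J(S)$, where $S = \sqrt\beta X + \sqrt{1-\beta} Y$ and $B = \sqrt\beta\,\der{\rho}{1}{X,Y}(X,Y) + \sqrt{1-\beta}\,\der{\rho}{2}{X,Y}(X,Y)$. Specifically, setting $A = \sqrt\beta\rho_X(X) + \sqrt{1-\beta}\rho_Y(Y)$, I observe that $B = A + M_{\sqrt\beta,\sqrt{1-\beta}}(X,Y)$, and simple expansion gives
$$\ep(A - \widetilde\rho(S))^2 = \ep A^2 - 2\ep A\widetilde\rho(S) + J(S),$$
with $\ep A^2 = \beta J(X) + (1-\beta) J(Y) + 2\sqrt{\beta(1-\beta)}\,\ep\rho_X(X)\rho_Y(Y)$. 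Subtracting the same block from the proposed left-hand side, the claim collapses to $2\ep(A+M)\widetilde\rho(S) = 2J(S)$, i.e. $\ep[B\widetilde\rho(S)] = J(S)$, which is what I need to prove.

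The key step is to derive this identity by rescaling and invoking the preceding lemma. I will introduce $X' = \sqrt\beta X$ and $Y' = \sqrt{1-\beta}\,Y$, so that $S = X' + Y'$. A direct change-of-variables on the joint density gives
$$\der{\rho}{1}{X',Y'}(X',Y') = \frac{1}{\sqrt\beta}\der{\rho}{1}{X,Y}(X,Y), \qquad \der{\rho}{2}{X',Y'}(X',Y') = \frac{1}{\sqrt{1-\beta}}\der{\rho}{2}{X,Y}(X,Y).$$
The preceding lemma applied to the pair $(X',Y')$ then yields the two conditional expectation formulas
$$\ep\!\left[\left.\der{\rho}{1}{X,Y}(X,Y)\right|S\right] = \sqrt\beta\,\widetilde\rho(S), \qquad \ep\!\left[\left.\der{\rho}{2}{X,Y}(X,Y)\right|S\right] = \sqrt{1-\beta}\,\widetilde\rho(S).$$

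Combining these linearly, $\ep[B\mid S] = (\beta + (1-\beta))\widetilde\rho(S) = \widetilde\rho(S)$, and the tower property gives $\ep[B\widetilde\rho(S)] = \ep[\widetilde\rho(S)^2] = J(S)$, which is exactly the identity I reduced to. No hard step remains once the reduction above is done carefully; the only thing to check is the bookkeeping of the rescaling in the joint density, which I expect to be routine. The mild analytic issue is justifying differentiation under the integral (so that the lemma applies) and integrability of the cross terms in the expansion of $\ep(A-\widetilde\rho(S))^2$; these are standard under the implicit smoothness/integrability assumptions on the joint density, and in any case the proposition will be applied downstream only to the normally perturbed $X = S + \rat{Z}_S$, $Y = T + \rat{Z}_T$ where all densities and scores are manifestly well-behaved.
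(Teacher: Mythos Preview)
Your proof is correct and follows essentially the same structure as the paper's: both expand the square $\ep(A-\widetilde\rho(S))^2$ and reduce the identity to the cross-term computation $\ep[B\,\widetilde\rho(S)]=J(S)$, then obtain the stated result by rescaling $X'=\sqrt\beta X$, $Y'=\sqrt{1-\beta}\,Y$. The only difference is the tool used for that key step: the paper invokes the two-dimensional Stein identity $\ep\,\der{\rho}{i}{X,Y}(X,Y)f(X,Y)=-\ep\,\ra{f}{i}(X,Y)$ with $f(x,y)=\widetilde\rho(x+y)$ to obtain $\ep\,\der{\rho}{i}{X,Y}(X,Y)\widetilde\rho(X+Y)=J(X+Y)$, whereas you reach the same conclusion by combining the preceding lemma's conditional-expectation formula with the tower property. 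These are equivalent derivations of the same identity, so the approaches are interchangeable; yours has the small advantage of using the lemma just proved rather than introducing Stein's equation separately.
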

\begin{proof}
By the two-dimensional version of Stein's equation, for any function $f(x,y)$
and for $i=1,2$:
$$ \ep \der{\rho}{i}{X,Y}(X,Y) f(X,Y) = 
- \ep \ra{f}{i}(X,Y).$$
Hence, we know that taking $f(x,y) = \widetilde{\rho}(x+y)$, for any $a,b$:
$$ \ep (a \rho_X(X) + b \rho_Y(Y)) \widetilde{\rho}(X+Y) = 
(a+b) J(X+Y) - \ep M_{a,b}(X,Y) \widetilde{\rho}(X+Y).$$
By considering
$ \int p(x,y) \left( a\rho_X(x) + b\rho_Y(y) - (a+b) \widetilde{\rho}(x+y)
\right)^2 dx dy,$ dealing with the cross term with the expression above,
we deduce that:
\begin{eqnarray*} 
\lefteqn{a^2 J(X) + b^2 J(Y) - (a+b)^2 J (X + Y)}  \\
& & + 2 ab \ep \rho_X(X) \rho_Y(Y)
+ 2 (a+b) \ep M_{a,b}(X,Y) 
\widetilde{\rho}(X + Y)\\
& = & \ep \left( a\rho_X(X) + b \rho_Y(Y) - (a+b)
 \widetilde{\rho}(X + Y) \right)^2 \geq 0.
\end{eqnarray*}
As in the independent case, 
we can rescale, and consider $X' = \sqrt{\beta} X$, $Y' = \sqrt{1-\beta}
Y$, and take $a = \beta, b = 1-\beta$.  
Note that $\sqrt{\beta} \rho_{X'}(u) = \rho_X(u/\sqrt{\beta})$,
$\sqrt{1-\beta} \rho_{Y'}(v) = \rho_Y(v/\sqrt{1-\beta})$. 
\end{proof}

Next, we require an extension of Lemma 3 of Barron \cite{barron} applied to 
single and bivariate random variables:

\begin{lemma} \label{lem:scoretail}
For any $S,T$, define $(X,Y) = (S+
\rat{Z}_S, T+\rat{Z}_T)$ and define $\ra{p}{2\tau}$ for the density of
$(S+\ra{Z}{2\tau}_S,T+\ra{Z}{2\tau}_T)$. 
There exists a constant $c_{\tau,k} 
= \sqrt{2} (2k/\tau e)^{k/2}$ such that for all $x,y$:
\begin{eqnarray*} 
\rat{p}_X(x) |\rho_X(x)|^{k} & \leq & c_{\tau,k} \ra{p}{2\tau}(x) \\
\rat{p}(x,y) |\der{\rho}{1}{X,Y}(x,y)|^{k} & \leq & c_{\tau,k} 
\ra{p}{2\tau}(x,y) \\
\rat{p}(x,y) |\der{\rho}{2}{X,Y}(x,y)|^{k} & \leq & c_{\tau,k} 
\ra{p}{2\tau}(x,y) 
\end{eqnarray*}
and hence 
$$ \left( \ep | \rho_X(X) |^k \right)^{1/k}
\leq \sqrt{ \frac{2^{1/k} 2k}{\tau e} }.$$ 
\end{lemma}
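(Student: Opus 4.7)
{\bf Proposal for Lemma \ref{lem:scoretail}}
The plan is to imitate Barron's trick from \cite{barron}: write the score as a conditional expectation of a simple Gaussian quantity, pull the $k$th power inside by Jensen's inequality, and then absorb the polynomial factor into the broader Gaussian $\phi_{2\tau}$ via a pointwise maximisation.

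Concretely, I would start by differentiating $\rat{p}_X(x) = \int \phi_\tau(x-s)\, dF_S(s)$ under the integral sign to obtain
$$ \rat{p}_X(x)\, \rho_X(x) = \int \phi_\tau'(x-s)\, dF_S(s) = -\frac{1}{\tau}\int (x-s)\, \phi_\tau(x-s)\, dF_S(s),$$
so that $\rho_X(x) = -\ep[(x-S)/\tau \mid X = x]$. Applying Jensen's inequality with the convex map $u \mapsto |u|^k$ gives
$$ |\rho_X(x)|^k \leq \ep\!\left[\left| \tfrac{x-S}{\tau}\right|^k \,\Big|\, X=x\right],$$
and multiplying back by $\rat{p}_X(x)$ reduces the problem to the pointwise estimate
$$ \left(\frac{|u|}{\tau}\right)^k \phi_\tau(u) \leq c_{\tau,k}\, \phi_{2\tau}(u).$$
The latter follows from the identity $\phi_\tau(u) = \sqrt{2}\, \phi_{2\tau}(u) \exp(-u^2/(4\tau))$ and the fact that $u \mapsto u^k e^{-u^2/(4\tau)}$ attains its maximum $(2k\tau/e)^{k/2}$ at $u^2 = 2k\tau$, which delivers exactly $c_{\tau,k} = \sqrt{2}\,(2k/(\tau e))^{k/2}$. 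Integrating the bound in $s$ against $dF_S$ then produces the first inequality, $\rat{p}_X(x)|\rho_X(x)|^k \leq c_{\tau,k}\, \ra{p}{2\tau}(x)$.

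The bivariate estimates are entirely parallel: I would write $\der{p}{1}{X,Y}(x,y)$ by differentiating the double convolution with $\phi_\tau(x-s)\phi_\tau(y-t)$ in the first argument, so that $\der{\rho}{1}{X,Y}(x,y) = -\ep[(x-S)/\tau \mid X=x,Y=y]$, and then apply Jensen's inequality conditionally on the pair $(X,Y)$ together with the same pointwise Gaussian bound in the variable $x-s$ (leaving the $\phi_\tau(y-t)$ factor untouched, which produces $\ra{p}{2\tau}$ only through the first coordinate; one has to be careful here that the statement actually wants $\ra{p}{2\tau}$ of the full joint law, which costs a further harmless factor that can be absorbed either by symmetrising or by applying the same manipulation in both arguments). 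The estimate for $\der{\rho}{2}{X,Y}$ is identical by symmetry. Finally, the $L^k$-norm bound comes from integrating the pointwise inequality:
$$ \ep |\rho_X(X)|^k = \int |\rho_X(x)|^k \rat{p}_X(x)\, dx \leq c_{\tau,k} \int \ra{p}{2\tau}(x)\, dx = c_{\tau,k},$$
and taking the $k$th root gives $c_{\tau,k}^{1/k} = \sqrt{2^{1/k} \cdot 2k/(\tau e)}$ as claimed.

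The only step that takes any thought is the pointwise Gaussian tail lemma; the Jensen manoeuvre is routine once one spots that normal perturbation makes the score a bona fide conditional expectation. The bivariate case is mainly a bookkeeping exercise provided one is careful about which coordinate's Gaussian factor is being widened to variance $2\tau$.
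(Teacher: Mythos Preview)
Your argument is correct and essentially identical to the paper's: the paper phrases the key step as H\"older's inequality applied to $|p'_X(x)|^k = \left|\ep\!\left[\tfrac{x-S}{\tau}\,\phi_\tau(x-S)\right]\right|^k \leq \ep\!\left[\left|\tfrac{x-S}{\tau}\right|^k\phi_\tau(x-S)\right]\left(\ep\,\phi_\tau(x-S)\right)^{k-1}$, which after dividing by $p_X(x)^k$ is precisely your conditional Jensen step, and then invokes the same pointwise bound $(|u|/\tau)^k\phi_\tau(u)\leq c_{\tau,k}\phi_{2\tau}(u)$. Your caution about the extra $\sqrt{2}$ in the bivariate case (from the trivial bound $\phi_\tau(y-t)\leq\sqrt{2}\,\phi_{2\tau}(y-t)$ on the untouched coordinate) is more careful than the paper, which simply says ``a similar argument gives the other bounds''; the constant is immaterial downstream.
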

\begin{proof} 
We adapt Barron's proof, using H\"{o}lder's inequality 
and the bound; $(u/\tau)^{k} \phi_{\tau}(u) \leq c_{\tau,k} \phi_{2\tau}(u)$ 
for all $u$. 
\begin{eqnarray*}
p'_X(x)^{k} & = & \left( \ep \left( \frac{x-S}{\tau} \right)
\phi_{\tau}(x-S) \right)^{k} \\
& \leq & \left( \ep \left( \frac{x-S}{\tau} \right)^{k} 
\phi_{\tau}(x-S) \right)
\left( \ep  \phi_{\tau}(x-S) \right)^{k-1} \\
& \leq & c_{\tau,k} \left( \ep \phi_{2\tau}(x-S) \right) p_X(x)^{k-1}
\end{eqnarray*}
A similar argument gives the other bounds.
\end{proof}

Now, the normal perturbation ensures that the density doesn't decrease too
large, and so the modulus of the score function can't grow too fast.

\begin{lemma} \label{lem:scorel2}
Consider $X$ of the form 
$X = S + \rat{Z}_S$, where $\var S \leq K \tau$. If $X$ has
score function $\rho$, then for $B> 1$:
$$ \int_{-B\sqrt{\tau}}^{B\sqrt{\tau}} \rho(u)^2 du \leq 
\frac{8B^3}{\sqrt{\tau}} \left( 3+ 2K \right).$$
\end{lemma}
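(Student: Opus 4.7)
The plan is to control $\rho(u)^2$ pointwise on $I=[-B\sqrt\tau, B\sqrt\tau]$ and then integrate, exploiting that the Gaussian perturbation gives a tractable representation of the score. Differentiating $p(u) = \ep\phi_\tau(u-S)$ with $\phi_\tau'(x) = -(x/\tau)\phi_\tau(x)$ gives the Tweedie-type identity
$$\rho(u) = \frac{g(u) - u}{\tau}, \qquad g(u):=\ep[S\mid X=u],$$
and a Cauchy--Schwarz on the integral representation of $p'(u)$ (against the measure $\phi_\tau(u-s)\,dF_S(s)$) then yields
$$\rho(u)^2 \leq \frac{\ep[(u-S)^2\mid X=u]}{\tau^2} \leq \frac{2u^2}{\tau^2} + \frac{2\,\ep[S^2\mid X=u]}{\tau^2}.$$
The first term integrates immediately to $4B^3/(3\sqrt\tau)$, which is comfortably inside the $K$-independent $24B^3/\sqrt\tau$ piece of the target.

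For the second term, I would apply Fubini,
$$\int_I\ep[S^2\mid X=u]\,du \;=\; \int s^2\!\int_I \frac{\phi_\tau(u-s)}{p(u)}\,du\,dF_S(s),$$
and control $1/p(u)$ using a pointwise lower bound of the shape $p(u)\geq \tfrac{e^{-2K}}{2\sqrt{2}}\phi_{\tau/2}(u)$, obtained as the one-dimensional analog of the bound derived in the proof of Proposition~\ref{prop:deltadom} (Chebyshev applied to $\{S^2\leq 2K\tau\}$ combined with $(u-s)^2\leq 2u^2+2s^2$). Splitting the outer $s$-integral at $|s|=\sqrt{2K\tau}$ --- using $s^2\leq 2K\tau$ and $\int_I\phi_\tau(u-s)\,du\leq 1$ on the bulk, and Chebyshev on the tail --- should then yield a bound of order $B^3K\tau^{3/2}$, which after division by $\tau^2$ gives the $16B^3K/\sqrt\tau$ part of the target.

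The main technical obstacle is keeping the resulting constants polynomial in $B$ and linear in $K$ rather than exponential: a naive combination of the Gaussian lower bound on $p$ with the inner integral produces a factor $e^{B^2+2K}$ that must be absorbed by a careful choice of splitting threshold in $s$, or by localizing the $1/p$ estimate near each $s$ using the fact that the integrand $\phi_\tau(u-s)$ is concentrated in a $\sqrt\tau$-window around $s$. An alternative route that may simplify the bookkeeping is to apply Lemma~\ref{lem:scoretail} with $k=2$, giving $\rho(u)^2 \leq c_{\tau,2}\,p^{(2\tau)}(u)/p(u)$, and then integrate the smoother ratio $p^{(2\tau)}/p$ over $I$ against the same Gaussian lower bound on $p$.
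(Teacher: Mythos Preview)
Your proposal identifies the right ingredients --- the Gaussian lower bound on $p$ from Proposition~\ref{prop:deltadom} and the moment control of Lemma~\ref{lem:scoretail} --- but it does not close the gap you yourself flag, and neither of your two suggested fixes does so. Both the Tweedie route and your alternative (b) reduce to bounding an integral of the form $\int_I g(u)/p(u)\,du$ with $g$ a smoothed density and $1/p(u)$ controlled only by $2e^{2K}/\phi_{\tau/2}(u)$; on $I$ this produces a factor $e^{B^2+2K}$ that no splitting in $s$ removes, because the obstruction comes from $u$ near $\pm B\sqrt{\tau}$ where $p(u)$ can genuinely be as small as $e^{-B^2/2}/\sqrt{2\pi\tau}$ regardless of where $S$ sits. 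The localization idea does not help here: if $S$ is concentrated near $0$, then for $s$ near $B\sqrt{\tau}$ the inner integral $\int_I \phi_\tau(u-s)/p(u)\,du$ is of order $e^{B^2/2}$, and Chebyshev on $F_S$ only gives mass $O(K/B^2)$ there, not the $O(e^{-B^2/2})$ you would need.

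The paper's proof resolves exactly this by keeping the H\"older exponent free: it writes
\[
\int_{-B\sqrt{\tau}}^{B\sqrt{\tau}} \rho(u)^2\,du
\;\leq\; \Bigl(\ep|\rho(X)|^{2k}\Bigr)^{1/k}\,\bigl(\inf_I p\bigr)^{-1/k}\,(2B\sqrt{\tau})
\;\leq\; \frac{8B}{\sqrt{\tau}}\,k\,\bigl(2\sqrt{2\pi}\,e^{B^2+2K}\bigr)^{1/k}\,e^{-1},
\]
using Lemma~\ref{lem:scoretail} for the $2k$th moment, and then \emph{optimizes over $k$}: choosing $k=B^2+2K+\log(2\sqrt{2\pi})$ turns $(e^{v})^{1/k}$ into $e$ and leaves only the linear factor $k$, giving the polynomial bound $\frac{8B}{\sqrt{\tau}}(B^2+2K+\log 2\sqrt{2\pi})$. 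Your alternative (b) is precisely this argument with $k$ frozen at $1$ (pointwise) or $2$, which is why it stalls. The missing idea is that the exponent in Lemma~\ref{lem:scoretail} is a free parameter, and letting it scale with $B^2+2K$ is what converts the exponential into a polynomial.
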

\begin{proof} As in Proposition \ref{prop:deltadom},
$p(u) \geq (2\exp 2K)^{-1} \phi_{\tau/2}(u)$, so that
for $u \in (-B\sqrt{\tau}, B\sqrt{\tau})$,
$(B\sqrt{\tau} p(u))^{-1} \leq 2\sqrt{\pi} \exp(B^2 + 2K)/B
\leq 2\sqrt{\pi} \exp(B^2 + 2K)
$. Hence for any $k \geq 1$, by H\"older's inequality:
\begin{eqnarray*}
\int_{-B\sqrt{\tau}}^{B\sqrt{\tau}} \rho(u)^2 du & \leq & 
\left( \int_{-B\sqrt{\tau}}^{B\sqrt{\tau}} |\rho(u)|^{2k} du 
\right)^{1/k} \left( 2B \sqrt{\tau} \right)^{1-1/k} \\
& \leq & \left( \int_{-B\sqrt{\tau}}^{B\sqrt{\tau}} 
\frac{ p(u) |\rho(u)|^{2k}}{ 2B \sqrt{\tau} \inf_u p(u)} du 
\right)^{1/k} \left( 2B \sqrt{\tau} \right) \\
& \leq &  \left( \frac{8B}{\sqrt{\tau}} \right) k \left( 2\sqrt{2\pi} 
\exp(B^2 + 2K) \right)^{1/k} \exp(-1).  
\end{eqnarray*}
Since we have a free choice of $k \geq 1$ to maximise $k \exp(v/k)$,
choosing $k=v \geq 1$  means that $k \exp (v/k) \exp(-1) = v
$. Hence we obtain a bound of
$$ \int_{-B\sqrt{\tau}}^{B\sqrt{\tau}} \rho(u)^2 du \leq  
\frac{8B}{\sqrt{\tau}} \left( B^2 + 2K + \log(2\sqrt{2\pi}) 
\right) 
\leq \frac{8B^3}{\sqrt{\tau}} \left( 3+ 2K \right).$$
\end{proof}

By considering $S$ normal, so that $\rho$ grows linearly with $u$, we know
that the $B^3$ rate of growth is a sharp bound.

\begin{lemma} \label{lem:fkgdensbound}
For random variables $S,T$, let  
$X = S + \rat{Z}_S$ and $Y = Y + \rat{Z}_T$, define
$L_B = \{ |x| \leq B \sqrt{\tau}, |y| \leq B \sqrt{\tau} \}$.
If $\max (\var S, \var T) \leq K \tau$ then
there exists a function $f_1(K,\tau)$ such that for 
$B \geq 1$:
$$ \ep M_{a,b}(X,Y) \widetilde{\rho}(X+Y) \1( (X,Y) \in L_B) 
\leq  \alpha(S,T) B^4 (a+b) f_1(K,\tau).$$
\end{lemma}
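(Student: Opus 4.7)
The plan is to decompose $M_{a,b}(x,y) p_{X,Y}(x,y)$ in terms of the difference density $\wt{p}(x,y) = p_{X,Y}(x,y) - p_X(x) p_Y(y)$, which vanishes under independence and is controlled by $\alpha(S,T)$. Since $\der{p}{1}{X,Y}(x,y) - \rho_X(x) p_{X,Y}(x,y)$ equals $\bigl( \der{p}{1}{X,Y}(x,y) - p'_X(x) p_Y(y) \bigr) - \rho_X(x) \wt{p}(x,y) = \partial_x \wt{p}(x,y) - \rho_X(x) \wt{p}(x,y)$, and similarly in $y$, we obtain the identity
\begin{eqnarray*}
M_{a,b}(x,y) p_{X,Y}(x,y) & = & a \bigl[ \partial_x \wt{p}(x,y) - \rho_X(x) \wt{p}(x,y) \bigr] \\
& & + b \bigl[ \partial_y \wt{p}(x,y) - \rho_Y(y) \wt{p}(x,y) \bigr].
\end{eqnarray*}
The expectation in question then splits into four integrals over $L_B$, which I would bound one at a time.

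The second step is to obtain uniform pointwise bounds on $\wt{p}$ and its partial derivatives, each of order $\alpha(S,T)$ times a constant depending only on $\tau$. The key observation is that, since $X = S + \rat{Z}_S$ and $Y = T + \rat{Z}_T$ with independent Gaussian noise,
$$\wt{p}(x,y) = \mathrm{Cov}(\phi_\tau(x-S), \phi_\tau(y-T)), \qquad \partial_x \wt{p}(x,y) = \mathrm{Cov}(\phi'_\tau(x-S), \phi_\tau(y-T)),$$
and similarly for $\partial_y \wt{p}$. Ibragimov's covariance inequality for strongly mixing variables yields $|\mathrm{Cov}(g(S), h(T))| \leq 4 \alpha(S,T) \|g\|_\infty \|h\|_\infty$, and since the Gaussian kernels $\phi_\tau$ and $\phi'_\tau$ are bounded by quantities depending only on $\tau$, the desired pointwise bounds follow.

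Finally, I would estimate each of the four integrals via Cauchy--Schwarz. For the two terms involving $\partial \wt{p}$, the uniform bound on $\partial \wt{p}$ together with the area of $L_B$ gives a factor $\alpha(S,T) \cdot B\sqrt\tau$, while the $L^2$ bound on $\widetilde{\rho}(x+y)$ over $L_B$ contributes a factor of order $B^2$; this uses Lemma \ref{lem:scorel2} applied to $X+Y$ after changing variables $u = x+y$ (noting that $X+Y$ is of the form $(S+T) + N(0,2\tau)$ with $\mathrm{Var}(S+T) \leq 4K\tau$, so the constants of Lemma \ref{lem:scorel2} merely get adjusted). For the two $\rho_X \wt{p}$ and $\rho_Y \wt{p}$ terms, Cauchy--Schwarz with $\|\wt{p}\|_\infty$ pulled out and Lemma \ref{lem:scorel2} applied both to the marginal score function (controlling $\|\rho_X\|_{L^2(L_B)}$ by $O(B^2)$) and to the sum score (controlling $\|\widetilde{\rho}(x+y)\|_{L^2(L_B)}$ by $O(B^2)$) produces a bound of order $\alpha(S,T) B^4$. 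Summing the four pieces, and absorbing the lower-order $B^3$ contributions into $B^4$ for $B \geq 1$, yields the stated estimate $\alpha(S,T) B^4 (a+b) f_1(K,\tau)$.

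The main obstacle is the covariance bound in step two: one needs to check that Ibragimov's inequality can be invoked with the effective mixing coefficient $\alpha(X,Y)$, and note that adding independent noise cannot increase the $\alpha$-coefficient, so in fact $\alpha(X,Y) \leq \alpha(S,T)$. The bookkeeping when applying Lemma \ref{lem:scorel2} to $X+Y$ is then routine but requires rescaling $\tau \to 2\tau$ and $K \to 2K$ and restricting to a slab of half-width $\sqrt{2} B \sqrt{2\tau} = 2B\sqrt\tau$.
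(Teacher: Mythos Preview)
Your proposal is correct and follows essentially the same route as the paper: the identity $M_{a,b}\,p_{X,Y} = a(\partial_x\wt{p}-\rho_X\wt{p}) + b(\partial_y\wt{p}-\rho_Y\wt{p})$ is exactly the ``rearranging $M_{a,b}$'' step in the paper, the pointwise bounds on $\wt{p}$ and $\partial\wt{p}$ via Ibragimov's covariance inequality are identical, and the Cauchy--Schwarz argument with Lemma~\ref{lem:scorel2} is the same. One simplification: your detour through $\alpha(X,Y)\le\alpha(S,T)$ is unnecessary, since $\phi_\tau(x-S)$ and $\phi_\tau(y-T)$ are already $\sigma(S)$- and $\sigma(T)$-measurable, so Ibragimov's inequality applies directly with $\alpha(S,T)$.
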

\begin{proof}
Lemma 1.2 of Ibragimov \cite{ibragimov} states that if $\xi, \nu$ are random
variables measurable with respect to ${{\cal{A}}}, {{\cal{B}}}$ respectively,
with $|\xi| \leq C_1$ and $|\nu| \leq C_2$ then:
$$ | \cov( \xi, \nu) | 
\leq 4C_1 C_2 \alpha({{\cal{A}}}, {{\cal{B}}}).$$ 

Now since $|\phi_\tau(u)| \leq 1/\sqrt{2 \pi \tau}$, and 
$|u \phi_\tau(u)/\tau| \leq \exp(-1/2)/\sqrt{2 \pi \tau^2}$, we deduce that:
$$ | p_{X,Y}(x,y) - p_X(x) p_Y(y) |
=  | \cov( \phi_{\tau}(x-S), \phi_{\tau}(y-T)) | 
\leq   \frac{2}{\pi \tau} \alpha(S,T). $$
Similarly:
\begin{eqnarray*}
|\der{p}{1}{X,Y}(x,y) - p_X'(x) p_Y(y) | 
& = & \left| \cov \left( \left( \frac{x-S}{\tau} \right)
 \phi_{\tau}(x-S) , \phi_{\tau}(y-T) \right) \right| \\
& \leq & 4 \left( \frac{\exp(-1/2)}{\sqrt{2 \pi \tau^2}} 
\frac{1}{\sqrt{2 \pi \tau}} \right) \alpha(S,T).
\end{eqnarray*}
By rearranging $M_{a,b}$, we obtain:
$$ p_{X,Y}(x,y) |M_{a,b}(x,y)| \leq \frac{2\alpha(S,T)}{\pi \tau}
\left( \frac{a+b}{\sqrt{\tau e}} + |a \rho_X(x) + b \rho_Y(y)| \right).$$
By Cauchy-Schwarz:
\begin{eqnarray*} 
\lefteqn{ \int p_{X,Y}(x,y) M_{a,b}(x,y) \widetilde{\rho}(x+y)
\1( (x,y) \in L_B) dx dy } \\
& \leq & \left( \frac{2 \alpha(S,T)}{\pi \tau} \right)
 \sqrt{ 32 B^4 (3 + 2K) } (a+b)
\left( \frac{\sqrt{4 B^2 \tau }}{\sqrt{\tau e}} 
+ \sqrt{ 16B^4(3+2K)} \right) \\
& \leq &  
\alpha(S,T) B^4 (a+b) \left( \frac{40\sqrt{2} (3+2K)}{\tau} \right).
\end{eqnarray*}
This follows firstly since by Lemma \ref{lem:scorel2}
$$ \int \rho_X(x)^2 \1((x,y) \in L_B) dx dy  \leq  (2B\sqrt{\tau}) 
\int_{-B\sqrt{\tau}}^{B\sqrt{\tau}} \rho_X(x)^2 dx 
\leq 16B^4 (3+2K)$$
and by Lemma \ref{lem:scorel2}
\begin{eqnarray*}
\lefteqn{\int \widetilde{\rho}(x+y)^2 \1((x,y) \in L_B) dx dy} \\
 & \leq & \int \widetilde{\rho}(x+y)^2 \1(|x+y| \leq 2B\sqrt{\tau})  
\1(|y| \leq B\sqrt{\tau})  dx dy \\
& \leq & 2B \sqrt{\tau} 
\int_{-2B\sqrt{\tau}}^{2B\sqrt{\tau}} 
\widetilde{\rho}(z)^2 dz \leq  32 B^4 (3 + 2K).
\end{eqnarray*}
\end{proof}
Now uniform decay of the tails gives us control everywhere else:
\begin{lemma} \label{lem:scoretailoff}
For $S,T$ with mean zero and variance 
$\leq K\tau$, let  
$X = S + \rat{Z}_S$ and $Y = T + \rat{Z}_T$.
There exists a function $f_2(\tau,K,\epsilon)$ such that:
$$ \ep M_{a,b}(X,Y) \widetilde{\rho}(X+Y) 
 \1( (X,Y) \notin L_B) dx dy \leq (a+b) 
\frac{f_2(\tau,K,\epsilon)}{B^{2-\epsilon}}.$$
For $S,T$ with $k$th moment ($k \geq 2$) bounded above, we can achieve a 
rate of decay of $1/B^{k-\epsilon}$.
\end{lemma}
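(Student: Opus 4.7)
I would expand $M_{a,b}(X,Y)\widetilde{\rho}(X+Y)\1((X,Y)\notin L_B)$ into its four summands, reducing the problem to estimating integrals of the form $\int p(x,y)\,\sigma\,\widetilde{\rho}(x+y)\1_{L_B^c}\,dx\,dy$, where $\sigma$ ranges over $\rho_X(x)$, $\rho_Y(y)$, $\der{\rho}{1}{X,Y}(x,y)$, and $\der{\rho}{2}{X,Y}(x,y)$. Cauchy--Schwarz then separates each such integral as $\bigl(\int p \sigma^2 \1_{L_B^c}\bigr)^{1/2} \bigl(\int p\,\widetilde{\rho}(x+y)^2\,dx\,dy\bigr)^{1/2}$. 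The right-hand factor equals $\sqrt{J(X+Y)}$, which is uniformly bounded by $1/\sqrt{2\tau}$, since Fisher information decreases under independent convolution and $X+Y$ contains the independent $N(0,2\tau)$ summand $\rat{Z}_S+\rat{Z}_T$.

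For the left-hand factor I would apply Hölder's inequality with exponents $m/2$ and $m/(m-2)$, for an $m>2$ to be chosen, giving $\int p\sigma^2 \1_{L_B^c} \leq \bigl(\int p|\sigma|^m\bigr)^{2/m}\,\pr((X,Y)\notin L_B)^{(m-2)/m}$. Lemma \ref{lem:scoretail} bounds $\int p|\sigma|^m$ by $c_{\tau,m}$ uniformly over all four choices of $\sigma$, using that the $(2\tau)$-perturbed marginal or joint density integrates to one. Chebyshev's inequality applied to $X = S+\rat{Z}_S$ and $Y=T+\rat{Z}_T$, each of variance at most $(K+1)\tau$, gives $\pr((X,Y)\notin L_B) \leq 2(K+1)/B^2$.

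Putting these pieces together yields a bound of the form $(a+b)\,C(K,\tau,m)\,B^{-2(m-2)/m}$. Given $\epsilon>0$, selecting $m$ large enough that $2(m-2)/m\geq 2-\epsilon$ delivers the stated $1/B^{2-\epsilon}$ rate, and defining $f_2(\tau,K,\epsilon)$ to absorb the resulting constant closes out the first half. For the $k$th-moment refinement, Markov's inequality replaces Chebyshev, giving $\pr(|X|\geq B\sqrt{\tau})\leq C_k/B^k$; the identical Hölder step then produces the rate $B^{-k(m-2)/m}$, which exceeds any exponent strictly less than $k$ once $m$ is chosen large enough.

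The only genuine obstacle is bookkeeping: tracking how the Hölder constant $c_{\tau,m}^{2/m}$ (which grows mildly in $m$ since $c_{\tau,m}$ is of order $m^{m/2}$) and the Chebyshev constant interact as $m$ is sent large in order to buy each factor of $1/B^\epsilon$. The conceptual point -- trading high $L^m$-integrability of the score (from the normal smoothing, via Lemma \ref{lem:scoretail}) against polynomial tail decay of $(X,Y)$ (from the variance hypothesis) -- is standard and requires no new ingredients.
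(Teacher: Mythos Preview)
Your overall strategy---split $M_{a,b}\widetilde{\rho}$ into four pieces, control the score moments by Lemma~\ref{lem:scoretail}, and control the indicator by a Chebyshev tail bound---is exactly the paper's. But there is a genuine arithmetic slip that breaks the claimed rate. After your Cauchy--Schwarz step the relevant factor is $(\int p\,\sigma^2\,\1_{L_B^c})^{1/2}$, i.e.\ a \emph{square root}. Your H\"older bound gives $\int p\,\sigma^2\,\1_{L_B^c}\leq C\,B^{-2(m-2)/m}$, but once you take the square root this becomes $C^{1/2}B^{-(m-2)/m}$, and since $(m-2)/m\to 1$ as $m\to\infty$ your method only yields $B^{-(1-\epsilon)}$, not $B^{-(2-\epsilon)}$. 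The bound ``$(a+b)\,C(K,\tau,m)\,B^{-2(m-2)/m}$'' you wrote down is the bound on the squared factor, not on the factor itself.

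The loss comes precisely from committing to Cauchy--Schwarz (i.e.\ H\"older with $p=q=2$) at the first step: you split the tail decay evenly between the two factors and then throw half of it away by bounding $\ep\widetilde{\rho}^2$ globally. The fix, which is what the paper does, is to apply H\"older directly with conjugate exponents $p,q$ where $p$ is taken close to $1$:
\[
\ep\bigl|\sigma\,\widetilde{\rho}\bigr|\1_{L_B^c}
\;\le\;\bigl(\ep|\sigma|^p\1_{L_B^c}\bigr)^{1/p}\bigl(\ep|\widetilde{\rho}|^q\bigr)^{1/q}.
\]
Lemma~\ref{lem:scoretail} gives $\ep|\sigma|^p\1_{L_B^c}\le c_{\tau,p}\,\pr^{(2\tau)}(L_B^c)\le c_{\tau,p}(K+2)/B^2$ and $\bigl(\ep|\widetilde{\rho}|^q\bigr)^{1/q}\le c_{\tau,q}^{1/q}$, so the bound is of order $B^{-2/p}$, and sending $p\downarrow 1$ recovers the full $B^{-(2-\epsilon)}$. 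The $k$th-moment refinement then follows exactly as you describe, with Markov replacing Chebyshev.
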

\begin{proof} By Chebyshev
$\pr \left( (S+\ra{Z}{2\tau}_S,  T+\ra{Z}{2\tau}_T ) \notin L_B) \right)
\leq \int \ra{p}{2\tau}(x,y) (x^2 + y^2)/(2B^2 \tau) dx dy \leq (K+2)/B^2$
so by H\"older-Minkowski for $1/p + 1/q =1$:
\begin{eqnarray*}
\lefteqn{ \ep \der{\rho}{1}{X,Y}(X,Y) \widetilde{\rho}(X+Y) 
\1( (X,Y) \notin L_B)}  \\
& \leq &  \left( \ep | \der{\rho}{1}{X,Y}(X,Y)|^p \1((X,Y) \notin L_B)
\right)^{1/p}
\left( \ep | \widetilde{\rho}(X+Y) |^q \right)^{1/q} \\
& \leq & c_{\tau,p}^{1/p} c_{\tau,q}^{1/q}
\pr \left( (S+\ra{Z}{2\tau}_S,  T+\ra{Z}{2\tau}_T ) \notin L_B) \right)^{1/p} 
\\
& \leq & \frac{ 2\sqrt{2} \exp(-1)}{\tau} \sqrt{pq} \left( 
\frac{K+2}{B^2} \right)^{1/p}
\end{eqnarray*}
By choosing $p$ arbitrarily close to 1, we can obtain the required expression.
The other terms work in a similar way.
\end{proof}
Similarly we bound the remaining product term:
\begin{lemma} \label{lem:prodbound}
For random variables $S,T$ with mean zero and variances satisfying 
$\max(\var S, \var T) \leq K\tau$, let  
$X = S + \rat{Z}_S$ and $Y = T + \rat{Z}_T$.
There exist functions $f_3(\tau,K)$ and $f_4(\tau,K)$ such that
$$\ep \rho_X(X) \rho_Y(Y)
\leq  f_3(\tau,K) B^4 \alpha(S,T) + f_4(\tau,K) /B^{2}. $$
\end{lemma}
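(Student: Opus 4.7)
The plan is to decompose $\ep \rho_X(X) \rho_Y(Y)$ into a contribution on the box $L_B = \{|x| \leq B\sqrt{\tau},\ |y| \leq B\sqrt{\tau}\}$ and a tail contribution on its complement, in direct analogy with Lemmas \ref{lem:fkgdensbound} and \ref{lem:scoretailoff}. On $L_B$ the mixing inequality will yield the $B^4 \alpha(S,T)$ term, while the tail will supply the $1/B^2$ term.

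For the $L_B$ contribution, I would add and subtract $p_X(x) p_Y(y)$ inside the integral. The cross-term involves $p_{X,Y}(x,y) - p_X(x) p_Y(y)$, which by Ibragimov's inequality applied to the bounded functions $\phi_{\tau}(x-S)$ and $\phi_{\tau}(y-T)$ is bounded pointwise by $2\alpha(S,T)/(\pi\tau)$, exactly as derived in Lemma \ref{lem:fkgdensbound}. Combining this with Cauchy--Schwarz and the $L^2$ bound $\int_{-B\sqrt{\tau}}^{B\sqrt{\tau}} \rho_X(u)^2 du \leq 8 B^3(3+2K)/\sqrt{\tau}$ from Lemma \ref{lem:scorel2}, the cross-term is at most $f_3(\tau,K) B^4 \alpha(S,T)$. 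The remaining product piece factorises as $\ep[\rho_X(X)\1(|X|\leq B\sqrt{\tau})]\cdot \ep[\rho_Y(Y)\1(|Y|\leq B\sqrt{\tau})]$; since Stein's identity gives $\ep\rho_X(X) = \ep\rho_Y(Y) = 0$, each factor equals the negative of its tail counterpart and can be bounded using H\"older together with Lemma \ref{lem:scoretail}.

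For the contribution on $L_B^c$, I would apply H\"older's inequality as in Lemma \ref{lem:scoretailoff}, using Lemma \ref{lem:scoretail} to control $(\ep|\rho_X(X)|^p)^{1/p}$ and $(\ep|\rho_Y(Y)|^q)^{1/q}$ with $1/p+1/q=1$, and Chebyshev's inequality (noting $\var X, \var Y \leq (K+1)\tau$) to bound $\pr((X,Y)\notin L_B) \leq 2(K+1)/B^2$. Pushing the H\"older exponent close to $1$ gives decay of the form $1/B^{2-\epsilon}$, which is absorbed into $f_4(\tau,K)/B^2$ in exactly the same spirit as Lemma \ref{lem:scoretailoff}.

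The main obstacle is the product term $\ep[\rho_X(X)\1(|X|\leq B\sqrt{\tau})]\cdot \ep[\rho_Y(Y)\1(|Y|\leq B\sqrt{\tau})]$ coming from the factorised half of the $L_B$ integral: with only the variance bound on $S,T$ available, both factors have to be estimated by the same H\"older--tail interpolation as on $L_B^c$, and it is this step that determines the final $1/B^2$-type rate claimed. The $\alpha$-mixing piece itself is comparatively clean once the density difference bound of Lemma \ref{lem:fkgdensbound} and the $L^2$ score bound of Lemma \ref{lem:scorel2} are in hand.
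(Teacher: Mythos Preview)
Your proposal is correct and follows essentially the same route as the paper. The only streamlining the paper makes is to apply $\ep\rho_X(X)=\ep\rho_Y(Y)=0$ \emph{before} truncating, rewriting the whole expectation as $\int (p_{X,Y}-p_Xp_Y)\rho_X\rho_Y\,dx\,dy$ and then splitting into $L_B$ and $L_B^c$; this removes your ``main obstacle'' (the truncated product piece) in one stroke, whereas you split first and then convert that piece back into a tail --- but the ingredients and the resulting bounds are the same.
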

\begin{proof}
Using part of Lemma \ref{lem:fkgdensbound}, we know that 
$p_{X,Y}(x,y) - p_X(x) p_Y(y) \leq 2\alpha(S,T)/(\pi \tau)$. Hence by an
argument similar to that of Lemmas \ref{lem:scoretailoff}, we obtain that:
\begin{eqnarray*}
\ep \rho_X(X) \rho_Y(Y) 
& = & \int \left( p_{X,Y}(x,y) - p_X(x) p_Y(y) \right)
\rho_X(x) \rho_Y(y) dx dy \\
& \leq & \frac{2\alpha(S,T)}{\pi \tau}
\int  |\rho_X(x)| |\rho_Y(y)| \1((x,y) \in L_B) dx dy \\
& & + \int p(x,y) |\rho_X(x)| |\rho_Y(y)| \1((x,y) \notin L_B) dx dy \\
& & + \int p(x) p(y) |\rho_X(x)| |\rho_Y(y)| \1((x,y) \notin L_B) dx dy  \\
& \leq & \frac{2\alpha(S,T)}{\pi \tau} \left(
\int_{-B\sqrt{\tau}}^{B\sqrt{\tau}} |\rho_X(x)|^2 dx \right)^2 \\
& & + 2 \left( \int p_{X,Y}(x,y) |\rho_X(x)|^2 \1((x,y) \notin L_B) dx dy
\right). \\
\end{eqnarray*}
as required.
\end{proof}

\begin{proof}{\bf of Theorem \ref{thm:fishsub}}
Combining Lemmas  \ref{lem:fkgdensbound}, \ref{lem:scoretailoff} and
\ref{lem:prodbound} , we obtain for given $K, \tau, \epsilon$  that
there exist constants $C_1, C_2$ such that
$$\ep M_{\sqrt{\beta},\sqrt{1-\beta}} \widetilde{\rho} 
+ \sqrt{\beta(1-\beta)} \ep \rho_X \rho_Y \leq
C_1 \alpha(S,T) B^4 +  C_2/B^{2-\epsilon},$$ so choosing $B = 
(1/4\alpha(S,T))^{1/6} > 1$, we obtain a bound of 
$C \alpha(S,T)^{1/3-\epsilon}$.

By Lemma \ref{lem:scoretailoff}, 
note that if $X,Y$ have bounded $k$th moment, then we obtain decay
at the rate
$C_1 \alpha(S,T) B^4 +  C_2/B^{k'}$, for any $k' < k$.
Choosing $B = \alpha(S,T)^{-1/(k'+4)}$, we obtain a rate of 
$\alpha(S,T)^{k'/(k'+4)}$. 
\end{proof}

\section{ Control of the mixing coefficients} \label{sec:contmix}
To control $\alpha(X+Z,Y)$ and to prove
Theorem \ref{thm:contmix}, we use truncation, smoothing and triangle 
inequality arguments similar to those of the previous section. 
Write
$W$ for $X+Z$, $L_B = \{ (x,y): |x| \leq B \sqrt{\tau}, |y| \leq B
\sqrt{\tau} \}$, and $\overline{R}$ for $R \cap (-B\sqrt{\tau}, B\sqrt{\tau})$.
Note that by Chebyshev, $\pr((W,Y) \in L_B^c) \leq  \pr(|W| \geq B\sqrt{\tau})
+  \pr(|Y| \geq B\sqrt{\tau}) \leq 2(K+1)/B^2$.
 Hence by the triangle inequality, for any
sets $S,T$:
\begin{eqnarray*}
\lefteqn{|\pr( (W,Y) \in (S,T)) - \pr( W \in S) \pr( Y \in T) | } \\
& \leq & |\pr( (W,Y) \in (S,T) \cap L_B) - \pr( W \in \overline{S}) 
\pr( Y \in \overline{T}) | \\
& &
+ \pr( (W,Y) \in L_B^c) + \pr( |W| \geq B\sqrt{\tau}) \pr(|Y| \geq
B\sqrt{\tau}) \\
& \leq & |\pr( (W,Y) \in (\overline{S},\overline{T}) ) 
- \pr( (X,Y) \in (\overline{S},\overline{T}) )  \\
& & + |\pr( (X,Y) \in (\overline{S},\overline{T}) )  - 
\pr( X \in \overline{S}) \pr( Y \in \overline{T}) | \\
& & + | \pr( X \in \overline{S}) 
- \pr( W \in \overline{S}) |
\pr( Y \in \overline{T}) + \frac{4(K+1)}{B^2} \\
& \leq & \int | p_{W,Y}(w,y) - p_{X,Y}(w,y) | \1((w,y) \in L_B)dw dy
+ \alpha(X,Y) \\
& & + \int | p_{X}(w)  - p_W(w) | \1(|w| \leq B\sqrt{\tau}) dw  + 
\frac{4(K+1)}{B^2} \\
\end{eqnarray*}
Here, the first inequality follows on splitting $\re^2$ into $L_B$ and 
$L_B^c$, the second by repeated application of the triangle inequality,
and the third by expanding out probabilities using the densities.
Now the key result is that:
\begin{proposition}
For $S$ and $T$, define  $X = S+ \rat{Z}_S$ and  $Y = T+ \rat{Z}_T$,
where $\max(\var S, \var T) \leq K\tau$. 
If $Z$ has variance
$\epsilon$, then there exists a constant $C = C(B, K, \tau)$ such that:
$$ \int |p_W(w) - p_X(w)| \1(|w| \leq B\sqrt{\tau}) dw \leq
(\exp(C \epsilon^{1/5}) - 1) + 2 \epsilon^{1/5}.$$
\end{proposition}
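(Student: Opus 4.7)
The plan is to write
$$p_W(w) - p_X(w) = \int \bigl(p_X(w-z) - p_X(w)\bigr) \phi_\epsilon(z)\,dz,$$
and split the $z$-integral at $|z| = \epsilon^{1/5}$. The two regimes will exactly account for the two terms in the stated bound.

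For the tail regime $|z| > \epsilon^{1/5}$, use the crude pointwise bound $|p_X(w-z) - p_X(w)| \leq p_X(w-z) + p_X(w)$ and interchange the order of integration. Since each density integrates to 1, the contribution to $\int |p_W - p_X|\1(|w|\leq B\sqrt{\tau})\,dw$ is at most $2\pr(|Z| > \epsilon^{1/5}) \leq 2\epsilon/\epsilon^{2/5} = 2\epsilon^{3/5} \leq 2\epsilon^{1/5}$ by Chebyshev, for $\epsilon \leq 1$.

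For the bulk regime $|z| \leq \epsilon^{1/5}$, exploit the fact that $p_X$ is log-Lipschitz on a neighbourhood of $[-B\sqrt{\tau},B\sqrt{\tau}]$. The key step is to establish a pointwise bound $|\rho_X(w)| \leq C$ on $|w| \leq 2B\sqrt{\tau}$ (with $C = C(B,K,\tau)$). This follows from two ingredients already in the paper: first, the Chebyshev argument used in Proposition \ref{prop:deltadom} gives $p_X(w) \geq (\exp(-2K)/2)\phi_{\tau/2}(w)$ for all $w$, which on $|w| \leq 2B\sqrt{\tau}$ is bounded below by a constant of size $\exp(-4B^2)$; second, writing $p_X'(w) = -\ep\bigl[\tfrac{w-S}{\tau}\phi_\tau(w-S)\bigr]$ and using $\sup_u |u|\phi_\tau(u)/\tau = 1/(\tau\sqrt{2\pi e})$ yields a uniform pointwise upper bound on $|p_X'|$. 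The ratio gives the required $C$. Then, for $|w|\leq B\sqrt{\tau}$ and $|z|\leq \epsilon^{1/5}$ (so that $w-tz\in [-2B\sqrt{\tau},2B\sqrt{\tau}]$ for $t\in[0,1]$), integration of $\rho_X$ gives $|\log(p_X(w-z)/p_X(w))| \leq C|z|$, hence $|p_X(w-z) - p_X(w)| \leq p_X(w)(\exp(C|z|) - 1)$. Integrating over $|w| \leq B\sqrt{\tau}$ (using $\int p_X(w)\,dw \leq 1$) and then over $|z|\leq \epsilon^{1/5}$ with weight $\phi_\epsilon(z)$ bounds the bulk contribution by $(\exp(C\epsilon^{1/5}) - 1)$.

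The main obstacle is the pointwise control of $\rho_X$ in step three: Lemma~\ref{lem:scorel2} only gives an integrated bound, and the moment bound from Lemma~\ref{lem:scoretail} is similarly of integrated type, so neither can be inserted directly into the log-Lipschitz estimate. However, because $p_X$ is a convolution with $\phi_\tau$, a direct pointwise computation via Gaussian lower and upper bounds delivers a constant $C$ depending only on $B, K, \tau$ — exactly the quantity appearing in the statement. Once this is in hand, the rest is the truncation bookkeeping described above, and the choice of threshold $\epsilon^{1/5}$ is essentially arbitrary (any positive power of $\epsilon$ strictly less than $1/2$ would suffice to make both the exponential and Chebyshev terms tend to zero).
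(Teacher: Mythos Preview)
Your opening identity $p_W(w)-p_X(w)=\int\bigl(p_X(w-z)-p_X(w)\bigr)\phi_\epsilon(z)\,dz$ presupposes that $Z$ is $N(0,\epsilon)$ and independent of $X$. Neither is part of the hypothesis: the proposition only says $Z$ has variance $\epsilon$, and in the intended application (the rooms-and-corridors step in the proof of Theorem~\ref{thm:strongconv}) $Z$ is built from the corridor block of the $X_i$ and is genuinely dependent on $X$. This is precisely why the paper works with the \emph{joint} density $p_{X,Z}$ throughout, writing $p_W(w)=\int p_{X,Z}(w-z,z)\,dz$ and $p_X(w)=\int p_{X,Z}(w,z)\,dz$, and controls the increment via the conditional score $\der{\rho}{1}{X,Z}(u,z)=\partial_u\log p_{X,Z}(u,z)$ rather than the marginal $\rho_X$. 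Your log-Lipschitz step therefore attacks the wrong object.

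Within an independent setting your argument would actually be cleaner than the paper's: the paper bounds $\int_{x-z}^x\der{\rho}{1}{X,Z}(u,z)\,du$ by Cauchy--Schwarz against the $L^2$ bound of (a bivariate version of) Lemma~\ref{lem:scorel2}, which yields only order $|z|^{1/2}$ and so forces the truncation at $|z|\le\delta^2=\epsilon^{2/5}$; your pointwise bound $|\rho_X|\le C$ gives order $|z|$ directly and lets you split at $\epsilon^{1/5}$. But to rescue the argument in the dependent case you would need the analogous pointwise bound on $\der{\rho}{1}{X,Z}$. The numerator estimate $|\partial_x p_{X|Z=z}(x)|\le(2\pi e\tau^2)^{-1/2}$ survives uniformly in $z$ (since $\rat{Z}_S$ is independent of $(S,Z)$), but your Gaussian lower bound on $p_{X|Z=z}$ now requires $\pr(S^2\le 2K\tau\mid Z=z)\ge 1/2$, which does not follow from $\var S\le K\tau$ alone. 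So as written there is a genuine gap: you must either add an independence assumption that the application does not satisfy, or rework the estimate at the joint-density level as the paper does.
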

\begin{proof}
We can show that for $|z| \leq \delta^2$ and $|x| \leq B\sqrt{\tau}$:
\begin{eqnarray*}
\frac{ p_{X,Z}(x-z,z)}{ p_{X,Z}(x,z)} 
& = & \exp \left( \int_{x-z}^x \der{\rho}{1}{X,Z}(u,z) du \right) \\
& \leq & \exp \left( \left( \int_{-2B\sqrt{\tau}}^{2B\sqrt{\tau}} 
\der{\rho}{1}{X,Z}(u,z)^2 du \right)^{1/2} \delta \right) \\
& \leq & \exp C \delta, 
\end{eqnarray*}
by adapting Lemma \ref{lem:scorel2} to cover bivariate random variables.
Hence we know that:
\begin{eqnarray*}
\lefteqn{\int |p_W(w) - p_X(w)| \1(|w| \leq B\sqrt{\tau}) dw} \\ 
& \leq & \int |p_{X,Z}(w-z,z) - p_{X,Z}(w,z)| \1(|z| \leq \delta^2,
|w| \leq B\sqrt{\tau}  ) dz dw \\
& & + \int |p_{X,Z}(w-z,z) - p_{X,Z}(w,z)| \1(|z| \geq \delta^2) dw dz \\
& \leq & \int p_{X,Z}(w,z) (\exp C \delta -1 ) dw dz
+ 2 \pr (|Z| \geq \delta^2)  \\
& \leq & (\exp C \delta  -1 )
+ 2 \pr (|Z| \geq \delta^2)     
\end{eqnarray*}
Thus choosing $\delta = \epsilon^{1/5}$, the result follows.
\end{proof}

Similar analysis allows us to control
$$\int | p_{W,Y}(w,y) - p_{X,Y}(w,y) | \1((w,y) \in L_B)dw dy.$$

\section*{Acknowledgements}
The author is a Fellow of Christ's College Cambridge, who funded
travel and research expenses. I also thank Yuri Suhov of the Statistical 
Laboratory for useful discussions, and the anonymous referee for helpful 
comments.


\begin{thebibliography}{10}

\bibitem{barron}
A.R. Barron.
\newblock Entropy and the {Central Limit Theorem}.
\newblock {\em Annals of Probability}, 14:336--342, 1986.

\bibitem{bradley}
R.C. Bradley.
\newblock Basic properties of strong mixing conditions.
\newblock In E.~Eberlein and M.~Taqqu, editors, {\em Dependence in Probability
  and Statistics}, pages 165--192. Birkhauser, Boston, 1986.

\bibitem{brown}
L.D. Brown.
\newblock A proof of the {Central Limit Theorem} motivated by the
  {Cram\'{e}r-Rao} inequality.
\newblock In G.~Kallianpur, P.R. Krishnaiah, and J.K. Ghosh, editors, {\em
  Statistics and Probability: Essays in Honour of {C.R. Rao}}, pages 141--148.
  North-Holland, New York, 1982.

\bibitem{carlen}
E.A. Carlen and A.~Soffer.
\newblock Entropy production by block variable summation and {Central Limit
  Theorems}.
\newblock {\em Communications in Mathematical Physics}, 140:339--371, 1991.

\bibitem{grimmett2}
G.R. Grimmett.
\newblock {\em Percolation {(Second Edition)}}.
\newblock Springer-Verlag, Berlin, 1999.

\bibitem{ibragimov}
I.A. Ibragimov.
\newblock Some limit theorems for stationary processes.
\newblock {\em Theory of Probability and Its Applications}, 7:349--381, 1962.

\bibitem{johnson}
O.T. Johnson.
\newblock Entropy inequalities and the {Central Limit Theorem}.
\newblock {\em Stochastic Processes and Their Applications}, 88:291--304, 2000.

\bibitem{johnson8}
O.T. Johnson.
\newblock Entropy and {FKG} random variables.
\newblock {\em Submitted to Communications in Mathematical Physics}, 2001.

\bibitem{johnson3}
O.T. Johnson and Yu.M. Suhov.
\newblock Entropy and random vectors.
\newblock {\em To appear in Journal of Statistical Physics}, 104, 2001.

\bibitem{linnik}
Yu.V. Linnik.
\newblock An information-theoretic proof of the {Central Limit Theorem} with
  the {Lindeberg Condition}.
\newblock {\em Theory of Probability and Its Applications}, 4:288--299, 1959.

\bibitem{shimizu}
R.~Shimizu.
\newblock On {Fisher's } amount of information for location family.
\newblock In G.P.Patil et~al, editor, {\em Statistical Distributions in
  Scientific Work, Volume 3}, pages 305--312. Reidel, 1975.


\bibitem{takano}
S.~Takano.
\newblock The inequalities of {Fisher Information} and {Entropy Power} for
  dependent variables.
\newblock In S.~Watanabe, M.~Fukushima, Yu.V Prohorov, and A.N. Shiryaev,
  editors, {\em Proceedings of the 7th Japan-Russia Symposium on Probability
  Theory and Mathematical Statistics, Tokyo 26-30 July 1995}, pages 460--470,
  Singapore, 1996. World Scientific.

\bibitem{takano2}
S.~Takano.
\newblock Entropy and a limit theorem for some dependent variables.
\newblock In {\em Prague Stochastics '98}, volume~2, pages 549--552. Union of
  Czech Mathematicians and Physicists, 1998.

\end{thebibliography}
\end{document}